\documentclass[12pt]{article}
\voffset -3cm
\hoffset -1.5cm
\textwidth 17cm
\textheight 25cm
\usepackage{amsmath}
\usepackage{amssymb}
\usepackage{mathrsfs}
\numberwithin{equation}{section}
\usepackage{datetime}
\usepackage{enumerate}
\usepackage{diagbox}
\usepackage[linkcolor=black,urlcolor=black,colorlinks=true]{hyperref}
\usepackage{xcolor,colortbl}
\usepackage{tabularx}
\def \beq {\begin{equation}}
\def \eeq {\end{equation}}
\def \ba {\begin{array}}
\def \ea {\end{array}}
\def \dis {\displaystyle}

\renewcommand{\ker}{\mathop{\rm Ker}}

\newcommand{\fdem}{\hspace*{\fill}~$\Box$\par\endtrivlist\unskip}

\def \div {\mbox{\rm div}\,}

\def \ep {\varepsilon}

\def \ph {\varphi}
\def \th {\theta}

\newcommand{\N}{\mathbb{N}}     
\newcommand{\Z}{\mathbb{Z}}
\newcommand{\R}{\mathbb{R}}

\newcommand{\Q}{\mathbb{Q}}

\def \cD {\mathscr{D}}
\def \cE {\mathscr{E}}

\def \cI {\mathscr{I}}

\def \cM {\mathscr{M}}

\def \sfC {\mathsf{C}}
\def \sfD {\mathsf{D}}
\def \sfT {\mathsf{T}}
\def \beq {\begin{equation}}
\def \eeq {\end{equation}}
\def \ba {\begin{array}}
\def \ea {\end{array}}
\def \bs {\bigskip}

\def \ecart {\noalign{\medskip}}

\newenvironment{proof}[1]{\textit{Proof#1.\,}}{\fdem}

\newtheorem{atheo}{Theorem}[section]

\newtheorem{alem}{Lemma}[section]
\newtheorem{arem}{Remark}[section]
\newtheorem{Aexa}{Exemple}[section]
\newenvironment{aexam}{\begin{Aexa}\rm}{\end{Aexa}}
\newtheorem{apro}[alem]{Proposition}
\newtheorem{adef}[alem]{Definition}
%

%%%%%%%%%%
\title{\Large Asymptotics of ODE's flows everywhere or almost-everywhere in the torus:
from rotation sets to homogenization of transport equations
%rectification of fields, ergodicity, unit rotation sets, and homogenization of linear transport equations with oscillating data
}
\author{\large Marc Briane \& Lo\"\i c Herv\'e
\\*[.1em]
\normalsize Univ Rennes, INSA Rennes,  CNRS, IRMAR - UMR 6625, F-35000 Rennes, France
\\*[.1em]
\normalsize mbriane@insa-rennes.fr \& loic.herve@insa-rennes.fr
}
\date{}
\begin{document}
\maketitle
\tableofcontents
\begin{abstract}
In this paper, we study various aspects of the ODE's flow $X$ solution to the equation $\partial_t X(t,x)=b(X(t,x))$, $X(0,x)=x$ in the $d$-dimensional torus $Y_d$, where $b$ is a regular $\Z^d$-periodic vector field from $\R^d$ in $\R^d$.
We present an original and complete picture in any dimension of all logical connections between the following seven conditions involving the field $b$: \begin{itemize}\setlength\itemsep{-.2em}
\item the everywhere asymptotics of the flow $X$,
\item the almost-everywhere asymptotics of the flow $X$,
\item the global rectification of the vector field $b$ in $Y_d$,
\item the ergodicity of the flow related to an invariant probability measure which is absolutely continuous with respect to Lebesgue's measure,
\item the unit set condition for Herman's rotation set $\sfC_b$ composed of the means of $b$ related to the invariant probability measures,
\item the unit set condition for the subset $\sfD_b$ of $\sfC_b$ composed of the means of $b$ related to the invariant probability measures which are absolutely continuous with respect to Lebesgue's measure,
\item the homogenization of the linear transport equation with oscillating data and the oscillating velocity $b(x/\ep)$ when $b$ is divergence free.
\end{itemize}
The main and surprising result of the paper is that the almost-everywhere asymptotics of the flow $X$ and the unit set condition for $\sfD_b$ are equivalent when $\sfD_b$ is assumed to be non empty, and that the two conditions turn to be equivalent to the homogenization of the transport equation when $b$ is divergence free. In contrast,  using an elementary approach based on classical tools of PDE's analysis, we extend the two-dimensional results of Oxtoby and Marchetto to any $d$-dimensional Stepanoff flow: this shows that the ergodicity of the flow may hold without satisfying the everywhere asymptotics of the flow.
%In contrast, revisiting the two-dimensional Stepanoff flow by an elementary approach based on classical tools of PDE's analysis, we obtain that in any dimension the ergodicity of the flow may hold, without satisfying the everywhere asymptotics.
\end{abstract}
\par\bs\noindent
{\bf Keywords:} ODE's flow, transport equation, asymptotics of the flow, rectification of fields, invariant measure, rotation set, ergodicity, homogenization
\par\bs\noindent
{\bf Mathematics Subject Classification:} 34E10, 35B27, 37C10, 37C40
%%%%%%%%%%
\section{Introduction}
In this paper we study various aspects of the ODE's flow $X$ in the torus $Y_d$
\beq\label{bXi}
\left\{\ba{ll}
\dis {\partial X\over\partial t}(t,x)=b(X(t,x)), & t\in\R
\\ \ecart
X(0,x)=x\in \R^d,
\ea\right.
\eeq
where $b$ is a $\Z^d$-periodic vector field in $C^1(\R^d)^d$ (denoted by $b\in C^1_\sharp(Y_d)^d$), which completely determines the flow $X$.
\par
First, we are interested in the asymptotics of the flow $X$ depending on whether it can hold almost-everywhere (a.e.), or everywhere~(e.) in $Y_d$, namely
%\beq\label{aXeae}
\[
\exists\,\lim_{t\to\infty}{X(t,x)\over t}\;\;\mu\mbox{-a.e. }x\in Y_d\quad\mbox{or}\quad\exists\,\lim_{t\to\infty}{X(t,x)\over t}\;\;\forall\,x\in Y_d,
\]
for some probability measure $\mu$ on $Y_d$.
If the flow $X$ is ergodic with respect to some invariant probability measure $\mu$, {\em i.e.} that $\mu$ agrees with its image measure $\mu_{X(t,\cdot)}$ for any $t\in\R$ (see Section~\ref{ss.defrec} below), then Birkhoff's theorem (see, {\em e.g.}, \cite[Theorem~1, Section~2, Chapter~1]{CFS}) ensures that
\[
\lim_{t\to\infty}{X(t,x)\over t}=\lim_{t\to\infty}\left({1\over t}\int_0^t b(X(s,x))\,ds\right)=\int_{Y_d}b(y)\,d\mu(y)\quad\mu\mbox{-a.e. }x\in Y_d.
\]
The non empty set $\cI_b$ composed of the invariant probability measures for the flow~$X$ plays a fondamental role in ergodic theory.
Associated with the set $\cI_b$, the rotations sets of \cite{MiZi} are strongly connected to the asymptotic behavior of the flow.
In particular, the compact convex Herman rotation set \cite{Her} defined by
\beq\label{Cbi}
\sfC_b:=\left\{\int_{Y_d}b(y)\,d\mu(y):\mu\in\cI_b\right\}
\eeq
characterizes the everywhere asymptotics of the flow, since by \cite[Proposition~2.1]{BrHe} we have for any $\zeta\in\R^d$,
\beq\label{Cbsini}
\sfC_b=\{\zeta\}\quad\Leftrightarrow\quad \forall\,x\in Y_d,\;\;\lim_{t\to\infty}{X(t,x)\over t}=\zeta.
\eeq
\par\noindent
We also consider the subset $\sfD_b$ of $\sfC_b$ defined by
\beq\label{Dbi}
\sfD_b:=\left\{\int_{Y_2}b(y)\,\sigma(y)\,dy:\sigma\in L^1_\sharp(Y_d)\mbox{ and }\sigma(y)\,dy\in\cI_b\right\},
\eeq
which is {\em a priori} less interesting than Herman's rotation set, since it may be empty and it is not compact in general. But surprisingly, the set $\sfD_b$ characterizes the almost-everywhere asymptotics of the flow, which is the first result of our paper. More precisely, assuming the existence of an a.e. positive invariant density function with respect to Lebesgue's measure, we prove that for any $\zeta\in\R^d$ (see Theorem~\ref{thm.Db}),
\beq\label{Dbsini}
\sfD_b=\{\zeta\}\quad\Leftrightarrow\quad \lim_{t\to\infty}{X(t,x)\over t}=\zeta\;\;\mbox{a.e. }x\in Y_d.
\eeq
\par
On the other hand, as a natural association with flow~\eqref{bXi}, we consider the linear transport equation with oscillating data
\beq\label{tranequi}
\left\{\ba{ll}
\dis {\partial u_\ep\over\partial t}(t,x)-b(x/\ep)\cdot\nabla_x u_\ep(t,x)=f(t,x,x/\ep) & \mbox{in }(0,T)\times\R^d
\\ \ecart
u_\ep(0,x)=u_0(x,x/\ep) & \mbox{for }x\in\R^d,
\ea\right.
\eeq
where $f(t,x,y)$ and $u_0(x,y)$ are suitably regular and $\Z^d$-periodic functions with respect to variable $y$.
In their famous paper~\cite{DiLi} DiPerna and Lions showed the strong proximity between ODE's flows~\eqref{bXi} and transport equations, in particular when the velocity has a good divergence. In the context of homogenization, the linear transport equation with oscillating data~\eqref{tranequi} as $\ep\to 0$ was widely studied in the literature. Tartar~\cite{Tar} proved that in general homogenization of first-order equations leads to nonlocal effects. These effects were studied carefully in~\cite{AHZ1} for equation \eqref{tranequi}. In order to avoid any anomalous effective effect, namely to get a homogenized transport equation of same nature, it is thus necessary to assume some additional condition. Assuming that the vector field $b$ is divergence free and the associated flow $X$ is ergodic, Brenier~\cite{Bre} first obtained the weak convergence of the solution $u_\ep$ to the transport equation. Following this seminal work, the homogenization of the transport equation was obtained for instance in \cite{Gol1,Gol2,HoXi,Tas} with various conditions, but which are all based on the ergodicity of the flow. Extending the result of \cite{Bre} with ergodicity arguments, Peirone~\cite{Pei} proved the convergence of the solution to the two-dimensional transport equation \eqref{tranequi} with $f(t,x,y)=0$ and $u_0(x,y)$ independent of $y$, under the sole assumption that $b$ is a non vanishing field in~$C^1_\sharp(Y_2)^2$. More recently, the homogenization of the transport equation with $f(t,x,y)=0$ and $u_0(x,y)$ independent of $y$, was derived in \cite{Bri2} (see~\cite{Bri3} for a non periodic framework) under the global rectification of the vector field $b$, which is not an ergodic condition, {\em i.e.} the existence of a $C^2$-diffeomorphism $\Psi$ on $Y_d$ and of a vector $\xi\in\R^d$ such that
\beq\label{reci}
\forall\,y\in Y_d,\quad\nabla\Psi(y)\,b(y)=\xi. 
\eeq
This result was extended in~\cite{BrHe} replacing the classical ergodic condition by the unit rotation set condition $\#\sfC_b=1$, or, equivalently, the everywhere asymptotics~\eqref{Cbsini} of the flow.
\par
In the present paper, we prove (see Theorem~\ref{thm.SNChom}) that the homogenization of transport equation \eqref{tranequi} with a divergence free velocity field, holds if, and only if, one of the equivalent conditions of \eqref{Dbsini} is satisfied. It is a quite new result beyond all the former results based on the sufficient conditions induced either by the ergodic condition, or by the unit Herman's rotation set condition. The proof of this result which is partly based on two-scale convergence \cite{Ngu,All}, clearly shows (see Remark~\ref{rem.HY}) the difference between the ergodic approach of \cite{HoXi}, and the present approach through the unit set condition \eqref{Dbsini} which turns out to be optimal.
\par
Therefore, we establish strong connections between the three following {\em a priori} foreign notions: the oscillations in the transport equation~\eqref{tranequi}, the means of $b$ only related to the invariant measures for the flow $X$ which are absolutely continuous with respect to Lebesgue's measure, and finally the almost-everywhere asymptotics of $X$.
More generally, owing to this new material we do build  the complete array of all logical connections between the following seven conditions (see Theorem~\ref{thm.7con} and Figure~\ref{array} below):
\begin{itemize}\setlength\itemsep{-.2em}
\item the global rectification~\eqref{reci} of the vector field $b$,
\item the ergodicity of the flow $X$ \eqref{bXi} related to an invariant probability measure which is absolutely continuous with respect to Lebesgue's measure,
\item the everywhere asymptotics of the flow $X$ in \eqref{Cbsini},
\item the almost-everywhere asymptotics of the flow $X$ in \eqref{Dbsini},
\item the unit set condition for Herman's rotation set $\sfC_b$ \eqref{Cbi},
\item the unit set condition for $\sfD_b$ \eqref{Dbi},
\item the homogenization of the transport equation~\eqref{tranequi} when $b$ is divergence free in $\R^d$.
\end{itemize}
In addition, the following pairs of conditions cannot be compared in general:
\begin{itemize}\setlength\itemsep{-.2em}
\item[-] the global rectification of $b$ and the ergodicity of $X$,
\item[-] the ergodicity of $X$ and the everywhere asymptotics of $X$,
\item[-] the ergodicity of $X$ and the unit set condition for $\sfC_b$.
\end{itemize}
The proof of the three last items involves the Stepanoff flow~\cite{Ste} (see Example~\ref{exa.OMS}) in which the vector field $b$ has a non empty finite zero set, and is parallel to a fixed direction $\xi\in\R^d$ with incommensurable coordinates.
Using a purely ergodic approach, Oxtoby~\cite{Oxt} and later Marchetto~\cite{Mar} proved that any two-dimensional flow homeomorphic to a Stepanoff flow admits a unique invariant probability measure $\mu$ for the flow which does not load the zero set of $b$, that $\mu$ is absolutely continuous with respect to Lebesgue's measure on $Y_2$, and finally that the flow is ergodic with respect to~$\mu$. Moreover, the set $\sfD_b$ is a unit set, but the rotation set $\sfC_b$ is a closed line set of $\R^2$, possibly not reduced to a unit set.
We extend (see Proposition~\ref{pro.OMS}) the two-dimensional results of \cite{Oxt,Mar} on the Stepanoff flow to any dimension $d\geq 2$, thanks to a new and elementary approach based on classical tools of PDE's analysis.
Finally, owing to another two-dimensional flow (see Example~\ref{exa.rhooDu} and Proposition~\ref{pro.CbneDb}) we obtain that the set $\sfD_b$ may be either empty or a singleton, while the rotation set $\sfC_b$ is a closed line set of~$\R^2$ possibly not reduced to a singleton.
\subsection{Notation}\label{ss.not}
\begin{itemize}
\item $(e_1,\dots,e_d)$ denotes the canonical basis of $\R^d$.
\item $``\cdot"$ denotes the scalar product and $|\cdot|$ the euclidian norm in $\R^d$.
\item $Y_d$, $d\geq 1$, denotes the $d$-dimensional torus $\R^d/\Z^d$, which is identified to the unit cube $[0,1)^d$ in $\R^d$.
\item $C^k_c(\R^d)$, $k\in\N\cup\{\infty\}$, denotes the space of the real-valued functions in $C^k(\R^d)$ with compact support in $\R^d$.
\item $C^k_\sharp(Y_d)$, $k\in\N\cup\{\infty\}$, denotes the space of the real-valued functions $f\in C^k(\R^d)$ which are $\Z^d$-periodic, {\em i.e.}
\beq\label{fper}
\forall\,k\in\Z^d,\ \forall\,x\in \R^d,\quad f(x+k)=f(x).
\eeq
\item The abbreviations ``a.e.'' for almost everywhere, and ``e.'' for everywhere will be used throughout the paper.
The simple mention ``a.e.'' refers to the Lebesgue measure on $\R^d$.
\item $dx$ or $dy$ denotes the Lebesgue measure on $\R^d$.
\item For a Borel measure $\mu$ on $Y_d$, extended by $\Z^d$-periodicity to a Borel measure $\tilde{\mu}$ on $\R^d$ (see definition \eqref{tmu} below), a $\tilde{\mu}$-measurable function $f:\R^d\to\R$ is said to be $\Z^d$-periodic $\tilde{\mu}$-a.e. in $\R^d$, if
\beq\label{fpermu}
\forall\,k\in\Z^d,\quad f(\cdot+k)=f(\cdot)\;\;\mbox{$\tilde{\mu}$-a.e. on }\R^d.
\eeq
\item For a Borel measure $\mu$ on $Y_d$, $L^p_\sharp(Y_d,\mu)$, $p\geq 1$, denotes the space of the $\mu$-measurable functions $f:Y_d\to\R$ such that $\int_{Y_d}|f(x)|^p\,d\mu(x)<\infty$.
\item $L^p_\sharp(Y_d)$, $p\geq 1$, simply denotes the space of the Lebesgue measurable functions $f$ in $L^p_{\rm loc}(\R^d)$, which are $\Z^d$-periodic $dx$-a.e. in $\R^d$.
\item  $\cM_{\rm loc}(\R^d)$ denotes the space of the non negative Borel measures on $\R^d$, which are finite on any compact set of $\R^d$.
\item $\cM_\sharp(Y_d)$ denotes the space of the non negative Radon measures on $Y_d$, and $\cM_p(Y_d)$ denotes the space of the probability measures on $Y_d$.
\item $\cD'(\R^d)$ denotes the space of the distributions on $\R^d$.
\item For a Borel measure $\mu$ on $Y_d$ and for $f\in L^1_\sharp(Y_d,\mu)$, $\overline{f}^{\mu}$ denotes the $\mu$-mean of $f$ on $Y_d$
\beq\label{barf}
\overline{f}^{\mu}:=\int_{Y_d}f(y)\,d\mu(y),
\eeq
which is simply denoted by $\overline{f}$ when $\mu$ is Lebesgue's measure.
The same notation is used for a vector-valued function in $L^1_\sharp(Y_d,\mu)^d$.
\item The notation $\cI_b$ in \eqref{Ib} will be used throughout the paper.
\end{itemize}
\subsection{Definitions and recalls}\label{ss.defrec}
Let $b:\R^d\to\R^d$ be a vector-valued function in $C^1_\sharp(Y_d)^d$.
Consider the dynamical system
\beq\label{bX}
\left\{\ba{ll}
\dis {\partial X\over\partial t}(t,x)=b(X(t,x)), & t\in\R
\\ \ecart
X(0,x)=x\in\R^d.
\ea\right.
\eeq
The solution $X(\cdot,x)$ to \eqref{bX} which is known to be unique (see, {\em e.g.}, \cite[Section~17.4]{HSD}) induces the dynamic flow $X$ associated with the vector field $b$, defined by
\beq\label{X}
\ba{lrll}
X: & \R\times \R^d & \to & \R^d
\\ \ecart
& (t,x) & \mapsto & X(t,x),
\ea
\eeq
which satisfies the semi-group property
\beq\label{sgroup}
\forall\,s,t\in\R,\ \forall\,x\in \R^d,\quad X(s+t,x)=X(s,X(t,x)).
\eeq
The flow $X$ is actually well defined in the torus $Y_d$, since
\beq\label{XxperY}
\forall\,t\in\R,\ \forall\,x\in \R^d,\ \forall\,k\in\Z^d,\ \quad X(t,x+k)=X(t,x)+k.
\eeq
Property~\eqref{XxperY} follows immediately from the uniqueness of the solution $X(\cdot,x)$ to \eqref{bX} combined with the $\Z^d$-periodicity of $b$.
\par
%For any $x\in Y_d$, the solution $X(\cdot,x)$ to \eqref{bX} is said to be {\em periodic in the torus} $Y_d$ if there exist $T>0$ and $k\in\Z^d$ such that
%\beq\label{XperYd}\forall\,t\in\R,\quad X(t+T,x)=X(t,x)+k.\eeq
%If $k=0$ the solution is said to be periodic in $\R^d$.
\par
A possibly signed Borel measure $\mu$ on $Y_d$ is said to be {\em invariant for the flow} $X$ if
\beq\label{invmu}
\forall\,t\in\R,\ \forall\,\psi\in C^0_\sharp(Y_d),\quad\int_{Y_d}\psi\big(X(t,y)\big)\,d\mu(y)=\int_{Y_d}\psi(y)\,d\mu(y).
\eeq
For a non negative Borel measure $\mu$ on $Y_d$, a function $f\in L^1_\sharp(Y_d,\mu)$ is said to be {\em invariant for the flow} $X$ with respect to $\mu$, if
\beq\label{invf}
\forall\,t\in\R,\quad f\circ X(t,\cdot)=f(\cdot)\;\;\mbox{ $\mu$-a.e. in $Y_d$}.
\eeq
The flow $X$ is said to be {\em ergodic} with respect to some invariant probability measure $\mu$, if
\beq\label{Xerg}
\forall\,f\in L^1_\sharp(Y_d,\mu),\mbox{ invariant for $X$ w.r.t. $\mu$},\quad f=\overline{f}^\mu\mbox{ $\mu$-a.e. in $Y_d$}.
\eeq
Then, define the set
\beq\label{Ib}
\cI_b:=\big\{\mu\in\cM_p(Y_d): \mu\mbox{ invariant for the flow }X\big\},
\eeq
where $\cM_p(Y_d)$ is the set of probability measures on $Y_d$.
From the set of invariant probability measures we define the so-called Herman \cite{Her} rotation set
\beq\label{Cb}
\sfC_b:=\left\{\overline{b}^\mu=\int_{Y_2}b(y)\,d\mu(y):\mu\in\cI_b\right\},
\eeq
and its subset
\beq\label{Db}
\sfD_b:=\left\{\overline{\sigma\,b}=\int_{Y_2}b(y)\,\sigma(y)\,dy:\sigma\in L^1_\sharp(Y_d)\mbox{ and }\sigma(y)\,dy\in\cI_b\right\}
\eeq
which is restricted to the invariant probability measures which are absolutely continuous with respect to Lebesgue's measure. If there is no such invariant measure, then the set $\sfD_b$ is empty (see Remark~\ref{rem.Dbe}).
\par
We have the following characterization of an invariant measure known as Liouville's theorem, which can also be regarded as a divergence-curl result with measures (see \cite[Proposition~2.2]{BrHe} and \cite[Remark~2.2]{BrHe} for further details).
\begin{apro}[Liouville's theorem]\label{pro.divcurl}
Let $b\in C^1_\sharp(Y_d)^d$, and let $\mu\in\cM_\sharp(Y_d)$. We define the Borel measure $\tilde{\mu}\in\cM_{\rm loc}(\R^d)$ on~$\R^d$ by
\beq\label{tmu}
\int_{\R^d}\ph(x)\,d\tilde{\mu}(x)=\int_{Y_d} \ph_\sharp(y)\,d\mu(y),\quad\mbox{where}\quad \ph_\sharp(\cdot):=\sum_{k\in\Z^d}\ph(\cdot+k)
\quad\mbox{for }\ph\in C^0_c(\R^d).
\eeq
Then, the three following assertions are equivalent:
\begin{enumerate}[$(i)$]
\item $\mu$ is invariant for the flow $X$, {\em i.e.} \eqref{invmu} holds,
\item $\tilde{\mu}\,b$ is divergence free in the space $\R^d$, {\em i.e.}
\beq\label{dtmub=0}
\div(\tilde{\mu}\,b)=0\quad\mbox{in }\cD'(\R^d),
\eeq
\item $\mu\,b$ is divergence free in the torus $Y_d$, {\em i.e.}
\beq\label{dmub=0}
\forall\,\psi\in C^0_\sharp(Y_d),\quad \int_{Y_d} b(y)\cdot\nabla\psi(y)\,d\mu(y)=0.
\eeq
%\beq\label{dmub=0}\forall\,\nabla\psi\in C^0_\sharp(Y_d)^d,\quad \int_{Y_d} b(y)\cdot\nabla\psi(y)\,d\mu(y)=\left(\int_{Y_d} b(y)\,d\mu(y)\right)\cdot\left(\int_{Y_d}\nabla\psi(y)\,dy\right),\eeq
%in which the right hand-side is $0$ if $\psi\in C^1_\sharp(Y_d)$.
\end{enumerate}
\end{apro}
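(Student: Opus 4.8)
The plan is to prove the equivalence of the three assertions $(i)$, $(ii)$, $(iii)$ by showing $(i)\Leftrightarrow(iii)$ and $(ii)\Leftrightarrow(iii)$, since $(iii)$ is the intrinsic (torus-level) statement and sits naturally between the other two. The key technical tool is differentiation in time of the invariance identity \eqref{invmu} combined with the transport structure carried by the flow: for a test function $\psi$, the function $t\mapsto\psi(X(t,y))$ is $C^1$ in $t$ with derivative $b(X(t,y))\cdot\nabla\psi(X(t,y))$, by \eqref{bX} and the chain rule.

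First I would establish $(i)\Rightarrow(iii)$. Assume $\mu$ is invariant, so $\int_{Y_d}\psi(X(t,y))\,d\mu(y)=\int_{Y_d}\psi(y)\,d\mu(y)$ for all $t$ and all $\psi\in C^0_\sharp(Y_d)$. Restrict to $\psi\in C^1_\sharp(Y_d)$ (which is dense, so no loss in the end). Differentiating the left side at $t=0$ under the integral sign — justified because the integrand and its $t$-derivative are jointly continuous and the domain of integration is the compact torus — yields $\int_{Y_d}b(y)\cdot\nabla\psi(y)\,d\mu(y)=0$, which is \eqref{dmub=0}. The passage from $C^1_\sharp$ to $C^0_\sharp$ test functions in \eqref{dmub=0} is vacuous since \eqref{dmub=0} only quantifies over $\psi$ that it pairs against $\nabla\psi$, so $\psi$ must already be at least $C^1$; thus $(iii)$ as stated is exactly what we get. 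Conversely, for $(iii)\Rightarrow(i)$, fix $\psi\in C^1_\sharp(Y_d)$ and set $g(t):=\int_{Y_d}\psi(X(t,y))\,d\mu(y)$. Using the semigroup property \eqref{sgroup}, $g(t+h)=\int_{Y_d}\psi(X(h,X(t,y)))\,d\mu(y)$, and differentiating in $h$ at $h=0$ gives $g'(t)=\int_{Y_d}b(X(t,y))\cdot\nabla\psi(X(t,y))\,d\mu(y)$. Now apply \eqref{dmub=0} with the test function $\psi\circ X(t,\cdot)$ — which lies in $C^1_\sharp(Y_d)$ by \eqref{XxperY} and the smoothness of the flow in $x$ — and observe that $\nabla_y\bigl(\psi(X(t,y))\bigr)=\nabla X(t,y)^{\!\top}\nabla\psi(X(t,y))$, while $b(y)\cdot\nabla X(t,y)^{\!\top}v = \bigl(\nabla X(t,y)\,b(y)\bigr)\cdot v$; here one must verify that $\nabla X(t,y)\,b(y)=b(X(t,y))$, which follows by differentiating the ODE \eqref{bX} in $x$ and uniqueness (the variational equation). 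Hence $g'(t)=0$ for all $t$, so $g$ is constant, giving \eqref{invmu} for $C^1_\sharp$ and then for $C^0_\sharp$ by density.

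For $(ii)\Leftrightarrow(iii)$ the argument is essentially an unwinding of definitions via the lift \eqref{tmu}. Given $\varphi\in C^1_c(\R^d)$, its periodization $\varphi_\sharp$ lies in $C^1_\sharp(Y_d)$, and $\nabla(\varphi_\sharp)=(\nabla\varphi)_\sharp$; testing \eqref{dtmub=0} against $\varphi$ means $\int_{\R^d}b\cdot\nabla\varphi\,d\tilde\mu=0$, and by \eqref{tmu} applied to the continuous compactly supported function $b\cdot\nabla\varphi$ together with the periodicity of $b$ this equals $\int_{Y_d}b\cdot(\nabla\varphi)_\sharp\,d\mu=\int_{Y_d}b\cdot\nabla(\varphi_\sharp)\,d\mu$. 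So \eqref{dtmub=0} says $\int_{Y_d}b\cdot\nabla\eta\,d\mu=0$ for every $\eta$ of the form $\varphi_\sharp$; the remaining point is that such $\eta$ are dense in $C^1_\sharp(Y_d)$ in the $C^1$-topology — indeed every $\eta\in C^1_\sharp(Y_d)$ is the periodization of $\eta\,\chi$ for a suitable smooth cutoff $\chi$ of unit integral over a fundamental domain, or more simply one uses a partition of unity — which upgrades \eqref{dtmub=0} to \eqref{dmub=0} and conversely. I expect the main obstacle to be the careful justification of the variational identity $\nabla_x X(t,x)\,b(x)=b(X(t,x))$ and the attendant differentiation-under-the-integral steps: these are where the $C^1$ regularity of $b$ (hence $C^1$ regularity of $x\mapsto X(t,x)$, with locally uniform bounds on $[0,t]\times K$) is genuinely used, and where one must be slightly careful about interchanging $\partial_t$ with $\int_{Y_d}$, though on the compact torus with continuous integrands this is standard. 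Since the paper cites \cite[Proposition~2.2]{BrHe} for the full statement, I would keep the write-up brief, presenting the time-differentiation computation and the periodization/density argument, and referring to that source for the finer points.
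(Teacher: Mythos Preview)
The paper does not prove this proposition; it simply cites \cite[Proposition~2.2]{BrHe} and \cite[Remark~2.2]{BrHe}. Your argument is the standard one and is correct: time-differentiation of the invariance identity handles $(i)\Leftrightarrow(iii)$, and the periodization dictionary \eqref{tmu} handles $(ii)\Leftrightarrow(iii)$. The variational identity $\nabla_x X(t,x)\,b(x)=b(X(t,x))$ that you flag is indeed the crux of $(iii)\Rightarrow(i)$, and your justification (both sides solve the same linear ODE $\dot Z=\nabla b(X(t,x))\,Z$ with initial value $b(x)$) is exactly right; alternatively it drops out of the semigroup identity $X(t,X(s,x))=X(t+s,x)$ by differentiating in $s$ at $s=0$.

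One small imprecision: in the periodization step you say every $\eta\in C^1_\sharp(Y_d)$ equals $(\eta\chi)_\sharp$ for a cutoff $\chi$ ``of unit integral over a fundamental domain''. What you actually need is $\chi_\sharp\equiv 1$ pointwise (a smooth partition of unity subordinate to the integer translates), not merely $\int\chi=1$; with that condition $(\eta\chi)_\sharp(x)=\sum_k\eta(x+k)\chi(x+k)=\eta(x)\sum_k\chi(x+k)=\eta(x)$ by periodicity of $\eta$. The paper itself records the surjectivity of $\varphi\mapsto\varphi_\sharp$ onto $C^\infty_\sharp(Y_d)$ in Remark~\ref{rem.tmu} (citing \cite[Lemma~3.5]{Bri1}), so you may simply invoke that.
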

\begin{arem}\label{rem.tmu}
Since any function $\psi\in C^\infty_\sharp(Y_d)$ can be represented as a function $\varphi_\sharp$ for a suitable function $\ph\in C^\infty_c(\R^d)$ (see \cite[Lemma~3.5]{Bri1}), we deduce that the mapping
\[
\ba{rll}
\cM_\sharp(Y_d) & \rightarrow & \dis \left\{\nu\in\cM_{\rm loc}(\R^d):\forall\,\varphi\in C^0_c(\R^d),\ \varphi_\sharp=0\Rightarrow\int_{\R^d}\varphi(x)\,d\nu(x)=0\right\}
\\ \ecart
\mu & \mapsto & \tilde{\mu}
\ea
\]
is bijective. Therefore, the measure $\tilde{\mu}$ of \eqref{tmu} completely characterizes the measure~$\mu$.
\end{arem}
\par
By virtue of \cite[Proposition~2.1]{BrHe} (see also \cite{MiZi}) Herman's set $\sfC_b$ satisfies the following result.
\begin{apro}[\cite{BrHe,MiZi}]\label{pro.Cb}
Let $b\in C^1_\sharp(Y_d)^d$. Then, we have for any $\zeta\in\R^d$,
\beq\label{Cbsin}
\sfC_b=\{\zeta\}\quad\Leftrightarrow\quad \forall\,x\in Y_d,\;\;\lim_{t\to\infty}{X(t,x)\over t}=\zeta.
\eeq
\end{apro}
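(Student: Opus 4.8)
\medskip\noindent\emph{Proof proposal.} The plan is to establish the two implications in \eqref{Cbsin} separately; the direction ``$\Leftarrow$'' is an elementary averaging argument, while ``$\Rightarrow$'' requires producing an invariant probability measure from time averages along a single trajectory. Throughout I would use that $b$ is bounded on $\R^d$ (being continuous and $\Z^d$-periodic), so that by \eqref{bX} one has $X(t,x)-x=\int_0^t b(X(s,x))\,ds$ with $|X(t,x)-x|\le t\,\|b\|_\infty$, and that $x\mapsto X(t,x)-x$ is $\Z^d$-periodic by \eqref{XxperY}, hence defines a bounded periodic function on $Y_d$. For ``$\Leftarrow$'', assume $\lim_{t\to\infty}X(t,x)/t=\zeta$ for every $x\in Y_d$, and fix any $\mu\in\cI_b$. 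By Fubini's theorem (the integrand $b\circ X$ being bounded and continuous on the finite measure space $[0,t]\times Y_d$) and the invariance \eqref{invmu} applied componentwise to $b$, I get $\int_{Y_d}\frac{X(t,x)-x}{t}\,d\mu(x)=\frac1t\int_0^t\Big(\int_{Y_d}b(X(s,x))\,d\mu(x)\Big)\,ds=\int_{Y_d}b(y)\,d\mu(y)=\overline{b}^{\mu}$ for every $t>0$. Letting $t\to\infty$, the left-hand integrand is bounded by $\|b\|_\infty$ and converges pointwise to $\zeta$, so dominated convergence gives $\overline{b}^{\mu}=\zeta$; since $\mu\in\cI_b$ was arbitrary and $\cI_b\neq\emptyset$, this is $\sfC_b=\{\zeta\}$.

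For ``$\Rightarrow$'', assume $\sfC_b=\{\zeta\}$ and argue by contradiction: suppose there is $x_0\in Y_d$ for which $X(t,x_0)/t$ does not converge to $\zeta$. Since $\{X(t,x_0)/t\}_{t\ge1}$ is bounded by $\|b\|_\infty$, there exist $\varepsilon>0$, a sequence $t_n\to\infty$ and $\eta\in\R^d$ with $|\eta-\zeta|\ge\varepsilon$ and $X(t_n,x_0)/t_n\to\eta$. I would then define $\mu_n\in\cM_p(Y_d)$ by $\int_{Y_d}\psi\,d\mu_n:=\frac1{t_n}\int_0^{t_n}\psi(X(s,x_0))\,ds$ for $\psi\in C^0_\sharp(Y_d)$ (using the $\Z^d$-periodic extension of $\psi$ to evaluate at $X(s,x_0)\in\R^d$), and extract, by weak-$*$ compactness of $\cM_p(Y_d)$ on the compact torus, a subsequence (not relabeled) with $\mu_n\rightharpoonup\mu\in\cM_p(Y_d)$. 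Two facts remain. First, $\mu$ is invariant: for fixed $\tau\in\R$ and $\psi\in C^0_\sharp(Y_d)$, the map $y\mapsto\psi(X(\tau,y))$ again lies in $C^0_\sharp(Y_d)$ by \eqref{XxperY}, and the change of variables $s\mapsto s+\tau$ in the defining integral yields $\big|\int_{Y_d}\psi(X(\tau,y))\,d\mu_n(y)-\int_{Y_d}\psi\,d\mu_n\big|=\frac1{t_n}\big|\int_{t_n}^{t_n+\tau}\psi(X(s,x_0))\,ds-\int_0^{\tau}\psi(X(s,x_0))\,ds\big|\le\frac{2|\tau|\,\|\psi\|_\infty}{t_n}\to0$, so passing to the weak-$*$ limit gives \eqref{invmu}, i.e. $\mu\in\cI_b$. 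Second, applying the definition of $\mu_n$ to each component of $b$ and using $\int_0^{t_n}b(X(s,x_0))\,ds=X(t_n,x_0)-x_0$, one has $\int_{Y_d}b\,d\mu_n=(X(t_n,x_0)-x_0)/t_n\to\eta$, and since $b\in C^0_\sharp(Y_d)^d$ this passes to the limit as $\overline{b}^{\mu}=\eta$. Hence $\eta\in\sfC_b=\{\zeta\}$, contradicting $|\eta-\zeta|\ge\varepsilon$; therefore $\lim_{t\to\infty}X(t,x)/t=\zeta$ for every $x\in Y_d$.

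The hard part will be the second implication, and within it the verification that the weak-$*$ limit $\mu$ is genuinely invariant: the key point is that the telescoping estimate above controls the two ``boundary terms'' $\int_{t_n}^{t_n+\tau}$ and $\int_0^\tau$ uniformly in $n$, so that the asymptotic near-invariance of the $\mu_n$ upgrades to exact invariance of $\mu$, while the fact that $\psi\circ X(\tau,\cdot)$ is an admissible periodic test function (a consequence of \eqref{XxperY}) is what allows the limit to be taken. The remaining ingredients — boundedness of $b$, $\Z^d$-periodicity of $x\mapsto X(t,x)-x$, Fubini, dominated convergence, and weak-$*$ compactness of the probability measures on the compact torus — are routine and I would only sketch them.
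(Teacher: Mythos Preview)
The paper does not actually prove this proposition: it is stated in Section~1.2 as a recall, with the attribution ``By virtue of \cite[Proposition~2.1]{BrHe} (see also \cite{MiZi})'', and no argument is given. So there is no ``paper's own proof'' to compare against.

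That said, your argument is correct and is the standard one. The ``$\Leftarrow$'' direction is exactly the averaging identity the authors themselves use (for absolutely continuous invariant measures) in equation~\eqref{sixi} of the proof of Theorem~\ref{thm.Db}; you apply it to an arbitrary $\mu\in\cI_b$, which is the right generality here. The ``$\Rightarrow$'' direction is the classical Krylov--Bogolyubov construction: empirical measures along an orbit, weak-$*$ compactness on the compact torus, and the telescoping estimate to upgrade near-invariance to invariance in the limit. Your observation that $\psi\circ X(\tau,\cdot)\in C^0_\sharp(Y_d)$ via \eqref{XxperY} is precisely what makes the limit passage legitimate, and the identification $\int_{Y_d}b\,d\mu_n=(X(t_n,x_0)-x_0)/t_n$ is the point that ties the rotation set to the asymptotics. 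Nothing is missing; the proof would be accepted as a self-contained argument for the cited result.
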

%%%%%%%%%%
\section{The rotation subset $\sfD_b$}
\noindent
We have the following characterization of the singleton condition satisfied by $\sfD_b$, which has to be compared to the one satisfied by $\sfC_b$ in Proposition~\ref{pro.Cb} above.
\begin{atheo}\label{thm.Db}
Let $b\in C^1_\sharp(Y_b)^d$ be such that there exists an a.e. positive function $\sigma_0\in L^1_\sharp(Y_d)$ with $\overline{\sigma_0}=1$, satisfying ${\rm div}(\sigma_0\,b)=0$ in $\R^d$. Then, the flow $X$ associated with $b$ satisfies for any $\zeta\in\R^d$, 
\beq\label{Dbsin}
\sfD_b=\{\zeta\}\quad\Leftrightarrow\quad \lim_{t\to\infty}{X(t,x)\over t}=\zeta,\;\;\mbox{a.e. }x\in Y_d.
\eeq
\end{atheo}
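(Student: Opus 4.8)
The plan is to base everything on the elementary identity $X(t,x)-x=\int_0^t b(X(s,x))\,ds$, so that $\frac{X(t,x)}{t}=\frac{x}{t}+A_t(x)$ with $A_t(x):=\frac1t\int_0^t b(X(s,x))\,ds$, and on Birkhoff's ergodic theorem for the measure-preserving flow $X(t,\cdot)$ on the compact torus $Y_d$ (its continuous-time form follows from the discrete one applied to the time-one map, the tail $\frac1t\int_{\lfloor t\rfloor}^t$ being negligible since $b$ is bounded). The pivotal object is the probability measure $\mu_0:=\sigma_0\,dy$, which belongs to $\cI_b$ by Liouville's theorem (Proposition~\ref{pro.divcurl}) applied to the hypotheses on $\sigma_0$, and which, since $\sigma_0>0$ a.e., is mutually absolutely continuous with Lebesgue's measure on $Y_d$; in particular $\overline{\sigma_0 b}\in\sfD_b$, so $\sfD_b\neq\emptyset$, and ``$\mu_0$-a.e.'' and ``a.e.'' are interchangeable throughout.

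For the implication ``$\Leftarrow$'', assume $A_t\to\zeta$ a.e.\ and let $\sigma\in L^1_\sharp(Y_d)$ with $\sigma\,dy\in\cI_b$ be arbitrary. Birkhoff's theorem applied to $\sigma\,dy$ produces a $(\sigma\,dy)$-a.e.\ limit $\beta_\sigma=\lim_t A_t$ with $\int_{Y_d}\beta_\sigma\,\sigma\,dy=\overline{\sigma b}$; since $\sigma\,dy\ll dx$, the a.e.\ convergence $A_t\to\zeta$ forces $\beta_\sigma=\zeta$ $(\sigma\,dy)$-a.e., whence $\overline{\sigma b}=\zeta$. As $\sigma$ is arbitrary and $\sfD_b\neq\emptyset$, we conclude $\sfD_b=\{\zeta\}$.

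For the implication ``$\Rightarrow$'', assume $\sfD_b=\{\zeta\}$. Birkhoff's theorem for $\mu_0$ gives a $\mu_0$-a.e.\ limit $\beta=\lim_t A_t$, which is bounded by $\|b\|_\infty$ and invariant for the flow, invariance being immediate from the semi-group property \eqref{sgroup} and the boundedness of $b$. It suffices to show $\beta=\zeta$ $\mu_0$-a.e., for then $\frac{X(t,x)}{t}\to\zeta$ $\mu_0$-a.e., hence a.e. I would prove this in two steps. First, I claim $\int_{Y_d}(b-\zeta)\,h\,d\mu_0=0$ for every bounded measurable $h$ invariant for the flow w.r.t.\ $\mu_0$. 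When $h\geq 0$, the measure $h\,\mu_0$ is still invariant: using that $X(t,\cdot)$ preserves $\mu_0$ — an identity first stated for $\psi\in C^0_\sharp(Y_d)$, then valid for all bounded Borel $\psi$ by the Riesz representation theorem — together with $h\circ X(t,\cdot)=h$ $\mu_0$-a.e., one gets $\int\psi(X(t,\cdot))\,h\,d\mu_0=\int\psi(X(t,\cdot))\,h(X(t,\cdot))\,d\mu_0=\int\psi\,h\,d\mu_0$; so if $\overline{h\sigma_0}>0$ the normalised density $\frac{1}{\overline{h\sigma_0}}h\sigma_0$ yields a member of $\cI_b$ that is absolutely continuous w.r.t.\ Lebesgue's measure, whose $b$-mean thus lies in $\sfD_b=\{\zeta\}$, i.e.\ $\int(b-\zeta)\,h\,d\mu_0=0$; this also holds trivially when $\overline{h\sigma_0}=0$, and hence, by writing a general bounded invariant $h$ as $\|h\|_\infty-(\|h\|_\infty-h)$, for all bounded invariant $h$. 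Second, for every such $h$, the continuous-time Birkhoff theorem applied to $hb$, combined with $h\circ X(s,\cdot)=h$ $\mu_0$-a.e.\ (so that $\int_0^t h(X(s,\cdot))b(X(s,\cdot))\,ds=h\int_0^t b(X(s,\cdot))\,ds$ $\mu_0$-a.e.\ by Fubini in $(s,x)$), yields $h\,A_t\to h\,\beta$ $\mu_0$-a.e.\ and $\int_{Y_d}h\,\beta\,d\mu_0=\int_{Y_d}h\,b\,d\mu_0$. Combining the two steps, $\int_{Y_d}(\beta-\zeta)\,h\,d\mu_0=0$ for all bounded invariant $h$; applying this to the invariant function $h=\mathbf{1}_{\{\beta_i>\zeta_i\}}$ gives $\int_{\{\beta_i>\zeta_i\}}(\beta_i-\zeta_i)\,d\mu_0=0$, hence $\mu_0(\{\beta_i>\zeta_i\})=0$, and symmetrically $\mu_0(\{\beta_i<\zeta_i\})=0$, for each coordinate $i=1,\dots,d$; therefore $\beta=\zeta$ $\mu_0$-a.e.

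The convergence inputs are standard Birkhoff, and the null-set bookkeeping is harmless because $\sigma_0$ is a.e.\ positive. The step I expect to demand the most care is turning a merely bounded, $\mu_0$-a.e.\ invariant function $h$ into a bona fide invariant density $h\sigma_0$: one must manage, for each fixed $t$, the $\mu_0$-null exceptional set in ``$h\circ X(t,\cdot)=h$'' and extend the invariance of $\mu_0$ from continuous to bounded Borel test functions. Once this is in place, the proof closes by linearity and the one-line superlevel-set argument above.
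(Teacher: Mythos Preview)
Your proof is correct and follows essentially the same route as the paper: apply Birkhoff with respect to $\sigma_0\,dx$ to get the a.e.\ limit $\beta=\lim_t X(t,x)/t$, observe that for every bounded invariant $h$ the measure $h\sigma_0\,dx$ is again invariant so $\sfD_b=\{\zeta\}$ forces $\int(\beta-\zeta)h\sigma_0=0$, and then choose $h$ to conclude $\beta=\zeta$ a.e. The only cosmetic differences are that the paper isolates the step ``$h$ invariant $\Rightarrow h\sigma_0\,dx$ invariant'' as a separate Lemma proved via Liouville's Jacobian formula~\eqref{jacX} (rather than your direct measure-preserving argument), and it tests against $h=(\beta-\zeta)^{\pm}$ componentwise rather than your superlevel-set indicators.
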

\begin{proof}{}
First of all, by virtue of the Birkhoff theorem applied with the invariant measure $\sigma_0(x)\,dx$ with the a.e. positive function $\sigma_0\in L^1_\sharp(Y_d)$, combined with the uniform boundedness of $X(t,x)/t$ for $t\in\R$ and $x\in Y_d$, there exists a function $\xi\in L^\infty_\sharp(Y_d)$ which is invariant for the flow $X$ with respect to Lebesgue's measure, such that
\[
\lim_{t\to\infty}{X(t,x)\over t} = \xi(x)\quad\mbox{a.e. }x\in Y_d.
\]
Hence, by Lebesgue's theorem we get that for any invariant measure $\sigma(x)\,dx$ with $\sigma\in L^1_\sharp(Y_d)$,
\beq\label{sixi}
\ba{ll}
\dis \int_{Y_d}b(x)\,\sigma(x)\,dx
& \dis = \lim_{t\to\infty}{1\over t} \int_0^t\left(\int_{Y_d}b(X(s,x))\,\sigma(x)\,dx\right)ds
\\ \ecart
& \dis = \int_{Y_d}\lim_{t\to\infty}\left({X(t,x)-x\over t}\right)\sigma(x)\,dx
\\ \ecart
& \dis = \int_{Y_d}\xi(x)\,\sigma(x)\,dx.
\ea
\eeq
$(\Rightarrow)$ Assume that $\sfD_b=\{\zeta\}$ for some $\zeta\in\R^d$. Then, we have for any invariant measure $\sigma(x)\,dx$ with $\sigma\in L^1_\sharp(Y_d)$,
\[
\int_{Y_d}b(x)\,\sigma(x)\,dx=\zeta\int_{Y_d}\sigma(x)\,dx,
\]
which by \eqref{sixi} implies that
\beq\label{xize}
\int_{Y_d}(\xi(x)-\zeta)\,\sigma(x)\,dx = 0.
\eeq
On the other hand, since the non negative and the non positive parts $(\xi-\zeta)^\pm$ of $\xi-\zeta$ are also invariant functions for the flow $X$ with respect to Lebesgue's measure, by Lemma~\ref{lem.invdenmea} below the measures $(\xi(x)-\zeta)^\pm\,\sigma_0(x)\,dx$ are invariant for $X$.
Therefore, putting the measures $\;\sigma(x)\,dx=(\xi(x)-\zeta)^\pm\,\sigma_0(x)\,dx\;$ in equality \eqref{xize} we get that
\[
\int_{Y_d}(\xi(x)-\zeta)\,(\xi(x)-\zeta)^\pm\,\sigma_0(x)\,dx = \pm \int_{Y_d}\big[(\xi(x)-\zeta)^\pm\big]^2\sigma_0(x)\,dx =  0,
\]
which due to the $a.e.$ positivity of $\sigma_0$, implies the right hand-side of \eqref{Dbsin}.
\par\medskip\noindent
$(\Leftarrow)$ Conversely, we deduce immediately from \eqref{sixi} that for any invariant measure $\sigma(x)\,dx$ with $\sigma\in L^1_\sharp(Y_d)$,
\[
\int_{Y_d}b(x)\,\sigma(x)\,dx = \zeta\int_{Y_d}\sigma(x)\,dx,
\]
which yields $\sfD_b = \{\zeta\}$.
\end{proof}
\begin{alem}\label{lem.invdenmea}
Let $b\in C^1_\sharp(Y_d)^d$ be a vector field in $\R^d$ such that there exists an a.e. positive function $\sigma_0\in L^1_\sharp(Y_d)$ with $\overline{\sigma_0}=1$, satisfying ${\rm div}(\sigma_0\,b)=0$ in $\R^d$. Then, a function $f$ in $L^\infty_\sharp(Y_d)$ is invariant for the flow $X$ with respect to Lebesgue's measure if, and only if, the signed measure $f(x)\,\sigma_0(x)\,dx$ is invariant for $X$.
\end{alem}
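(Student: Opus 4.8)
The plan is to deduce the Lemma from a single general fact about invertible flows, and then specialize. The general fact I would establish is: for \emph{any} finite Borel measure $\mu$ on $Y_d$ that is invariant for the flow $X$ and any $f\in L^1_\sharp(Y_d,\mu)$, the function $f$ is invariant for $X$ with respect to $\mu$ in the sense of \eqref{invf} if and only if the signed measure $f\mu$ is invariant for $X$ in the sense of \eqref{invmu}. Granting this, I would apply it with $\mu:=\sigma_0(x)\,dx$: this $\mu$ is a probability measure because $\sigma_0\geq 0$ a.e.\ and $\overline{\sigma_0}=1$, and it is invariant for $X$ by Liouville's theorem (Proposition~\ref{pro.divcurl}), since the periodic extension of $\mu$ in the sense of \eqref{tmu} is $\sigma_0\,dx$ on $\R^d$ and \eqref{dtmub=0} then reads $\div(\sigma_0 b)=0$, which holds by assumption. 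Since in addition $\sigma_0>0$ a.e., the measures $\mu$ and Lebesgue's measure are mutually absolutely continuous, so ``$\mu$-a.e.''\ and ``a.e.''\ are interchangeable, $f\in L^\infty_\sharp(Y_d)\subset L^1_\sharp(Y_d,\mu)$, the invariance of $f$ for $X$ with respect to $\mu$ coincides with its invariance with respect to Lebesgue's measure, and $f(x)\,\sigma_0(x)\,dx$ is exactly $f\mu$. The Lemma would then follow from the general fact.

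To prove the general fact, I would first record that by \eqref{XxperY} the map $X(t,\cdot)$ passes to the quotient as a continuous self-map of the torus $Y_d$, which by the semi-group property \eqref{sgroup} and $X(0,\cdot)={\rm id}$ is a homeomorphism with inverse $X(-t,\cdot)$; and that, since a finite signed Borel measure on the compact space $Y_d$ is determined by its integrals against the functions of $C^0_\sharp(Y_d)$, the invariance \eqref{invmu} of a (possibly signed) finite Borel measure $\nu$ amounts to the push-forward identity $X(t,\cdot)_\#\nu=\nu$ for all $t\in\R$. Next, from $X(t,\cdot)_\#\mu=\mu$ one gets, by the standard extension of $\int_{Y_d}h\circ X(t,\cdot)\,d\mu=\int_{Y_d}h\,d\mu$ from $h\in C^0_\sharp(Y_d)$ to $h\in L^1_\sharp(Y_d,\mu)$, and using $X(-t,X(t,y))=y$, that for $f\in L^1_\sharp(Y_d,\mu)$ and every Borel set $A\subset Y_d$,
\[
\big(X(t,\cdot)_\#(f\mu)\big)(A)=\int_{Y_d}\mathbf{1}_A\big(X(t,y)\big)\,f\big(X(-t,X(t,y))\big)\,d\mu(y)=\int_A f\big(X(-t,z)\big)\,d\mu(z),
\]
that is, $X(t,\cdot)_\#(f\mu)$ is absolutely continuous with respect to $\mu$ with density $f\circ X(-t,\cdot)$.

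Consequently, $f\mu$ is invariant for $X$, i.e.\ $X(t,\cdot)_\#(f\mu)=f\mu$ for all $t\in\R$, if and only if $f\circ X(-t,\cdot)=f$ $\mu$-a.e.\ for all $t\in\R$, which after replacing $t$ by $-t$ is precisely \eqref{invf}; this proves the general fact, hence the Lemma. I expect the only delicate point to be the change-of-variables identity above for the push-forward of the \emph{signed} measure $f\mu$: it uses crucially the invertibility of the flow on $Y_d$ and the invariance of $\mu$ itself, plus the elementary observations that $f\in L^\infty\subset L^1(\mu)$ and that $f\mu$ is a finite signed measure, hence determined by its action on $C^0_\sharp(Y_d)$. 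As an alternative one could recast both sides via Proposition~\ref{pro.divcurl} ($f\sigma_0\,dx$ being invariant $\iff\div(f\sigma_0 b)=0$ in $\cD'(\R^d)$), but turning the invariance of an $L^\infty$ function $f$ into the distributional relation $\sigma_0\,b\cdot\nabla f=0$ looks less direct than the argument sketched here.
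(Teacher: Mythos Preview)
Your proof is correct and takes a genuinely different route from the paper's. The paper works in $\R^d$ with the explicit Jacobian: it recalls Liouville's formula $J(t,x)=\exp\big(\int_0^t(\div b)(X(s,x))\,ds\big)$, uses the invariance of $\sigma_0\,dx$ to derive the pointwise identity $\sigma_0(X(t,y))\,J(t,y)=\sigma_0(y)$ a.e., and then performs the change of variables $x=X(t,y)$ in $\int_{\R^d}\varphi(X(-t,x))\,f(x)\,\sigma_0(x)\,dx$ to reach $\int_{\R^d}\varphi(y)\,f(X(t,y))\,\sigma_0(y)\,dy$; the equivalence drops out from this identity after passing back to $Y_d$. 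Your argument abstracts all of this into the push-forward relation $X(t,\cdot)_\#(f\mu)=(f\circ X(-t,\cdot))\,\mu$, valid for \emph{any} invariant finite measure $\mu$ and any $f\in L^1(\mu)$, using only the bijectivity of $X(t,\cdot)$ on $Y_d$; the Jacobian never appears, and the specialization to $\mu=\sigma_0\,dx$ plus the mutual absolute continuity of $\mu$ and Lebesgue's measure finishes the job. What your route buys is generality and concision: the statement you prove covers invariant measures that need not be absolutely continuous, and it bypasses the differentiable structure entirely. What the paper's route buys is the explicit identity $\sigma_0(X(t,\cdot))\,J(t,\cdot)=\sigma_0$, which is a useful formula in its own right and makes the mechanism transparent at the level of densities.
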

\begin{proof}{}
First of all, for any $t\in\R$, $X(t,\cdot)$ is a $C^1$-diffeomorphism on $\R^d$ with reciprocal $X(-\,t,\cdot)$, as a consequence of the semi-group property \eqref{sgroup} satisfied by the flow $X$. Moreover, by virtue of Liouville's theorem the jacobian determinant of $X(t,\cdot)$ is given by
\beq\label{jacX}
\forall\,t\in\R,\ \forall\,x\in Y_d,\quad J(t,x):=\det\big(\nabla_x X(t,x)\big)=\exp\left(\int_0^t({\rm div}\,b)(X(s,x))\,ds\right).
\eeq
Since by Proposition~\ref{pro.divcurl} the measure $\widetilde{\sigma_0(x)\,dx}=\sigma_0(x)\,dx$ (due to the $\Z^d$-periodicity of~$\sigma_0$) is invariant for the flow $X$, we have for any function $\varphi\in C^0_c(\R^d)$ and any $t\in\R$,
\[
\varphi_\sharp(X(-t,\cdot))=\big(\varphi(X(-t,\cdot))\big)_\sharp\quad\mbox{by \eqref{XxperY}},
\]
and
\[
\ba{l}
\dis \int_{\R^d}\varphi(x)\,\sigma_0(x)\,dx =\int_{Y_d}\varphi_\sharp(x)\,\sigma_0(x)\,dx
\\ \ecart
\dis =\int_{Y_d}\varphi_\sharp(X(-t,x))\,\sigma_0(x)\,dx =\int_{Y_d}\big(\varphi(X(-t,x))\big)_\sharp\,\sigma_0(x)\,dx
\\ \ecart
\dis =\int_{\R^d}\varphi(X(-t,x))\,\sigma_0(x)\,dx \underbrace{=}_{x=X(t,y)}\int_{\R^d}\varphi(y)\,\sigma_0(X(t,y))\,J(t,y)\,dy.
\ea
\]
This implies that the jacobian determinant $J(t,\cdot)$ satisfies the relation
\beq\label{JacX}
\forall\,t\in\R,\quad \sigma_0(X(t,y))\,J(t,y)=\sigma_0(y)\;\;\mbox{a.e. }y\in\R^d.
\eeq
Now, let $f\in L^\infty_\sharp(Y_d)$. From~\eqref{JacX} we deduce that for any function $\varphi\in C^0_c(\R^d)$ and any $t\in\R$,
\[
\ba{ll}
\dis \int_{\R^d}\varphi(X(-\,t,x))\,f(x)\,\sigma_0(x)\,dx
& \dis \underbrace{=}_{x=X(t,y)}\int_{\R^d}\varphi(y)\,f(X(t,y))\,\sigma_0(X(t,y))\,J(t,y)\,dy
\\ \ecart
& \dis =\int_{\R^d}\varphi(y)\,f(X(t,y))\,\sigma_0(y)\,dy.
\ea
\]
By virtue of Remark~\ref{rem.tmu} combined with the $\Z^d$-periodicity of the function $f$, the former equality also reads as
\beq\label{}
\forall\,\psi\in C^0_\sharp(Y_d),\ \forall\,t\in\R,\quad
\int_{Y_d}\psi(X(-\,t,x))\,f(x)\,\sigma_0(x)\,dx=\int_{Y_d}\psi(x)\,f(X(t,x))\,\sigma_0(x)\,dx.
\eeq
Therefore, due to the a.e. positivity of $\sigma_0$, the function $f\in L^\infty_\sharp(Y_d)$ is invariant for the flow~$X$ with respect to Lebesgue's measure, {\em i.e.} $f(X(\cdot,x))=f(x)$ a.e. $x\in Y_d$, if, and only if, the signed measure $f(x)\,\sigma_0(x)\,dx$ is invariant for the flow $X$.
\end{proof}
%%%%%%%%%%
\section{A NSC for homogenization of the transport equation}
\noindent
First of all, recall the definition of the two-scale convergence introduced by Nguetseng~\cite{Ngu} and Allaire~\cite{All}, which is easily extended to the time dependent case.
\begin{adef}\label{def.2s}
Let $T\in(0,\infty)$.
\begin{itemize}
\item[$a)$] A sequence $u_\ep(t,x)$ in $L^2((0,T)\times\R^d)$ is said to {\em two-scale converge} to a function $U(t,x,y)$ in
$L^2([0,T]\times\R^d;L^2_\sharp(Y_d))$, if we have for any function $\varphi\in C^0_c([0,T]\times\R^d;C^0_\sharp(Y_d))$ with compact support in $[0,T]\times\R^d\times Y_d$,
\beq\label{2scon}
\lim_{\ep\to 0}\int_{(0,T)\times\R^d}\kern-.2cm u_\ep(t,x)\,\varphi(t,x,x/\ep)\,dtdx
=\int_{(0,T)\times\R^d\times Y_d}\kern-.2cm U(t,x,y)\,\varphi(t,x,y)\,dtdxdy,
\eeq
%and we denote $u_\ep\stackrel{2s}{\rightharpoonup}U$.
\item[$b)$] According to \cite[Definition~1.4]{All} any function $\psi(t,x,y)\in C^0_c([0,T]\times\R^d;L^2_\sharp(Y_d))$ with compact support in $[0,T]\times\R^d\times Y_d$, is said to be an {\em admissible function for two-scale convergence}, if $(t,x)\mapsto \psi(t,x,x/\ep)$ is Lebesgue measurable and
\beq\label{admtf}
\lim_{\ep\to 0}\int_{(0,T)\times\R^d}\psi^2(t,x,x/\ep)\,dtdx=\int_{(0,T)\times\R^d\times Y_d}\psi^2(t,x,y)\,dtdxdy.
\eeq
\end{itemize}
\end{adef}
Then, we have the following two-scale convergence compactness result.
\begin{atheo}[\cite{All}, Theorem~1.2, Remark~1.5]
Any sequence $u_\ep(t,x)$ which is bounded in $L^2((0,T)\times\R^d)$ two-scale converges, up to extract a subsequence, to some function $U(t,x,y)$ in $L^2((0,T)\times\R^d;L^2_\sharp(Y_d))$.
Moreover, equality~\eqref{2scon} holds true for any admissible function \eqref{admtf}.
\end{atheo}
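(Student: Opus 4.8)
{ (proposal)}
The plan is to reproduce the classical functional-analytic proof of the two-scale compactness theorem, isolating the one genuinely analytic ingredient. That ingredient is the \emph{oscillation lemma}: for every $\varphi\in C^0_c([0,T]\times\R^d;C^0_\sharp(Y_d))$ one has
\[
\lim_{\ep\to0}\int_{(0,T)\times\R^d}\varphi(t,x,x/\ep)^2\,dtdx=\int_{(0,T)\times\R^d\times Y_d}\varphi(t,x,y)^2\,dtdxdy .
\]
I would prove this by a covering argument: since $\varphi$ is uniformly continuous with compact support, partition the support in $(t,x)$ into small cubes, replace $\varphi(t,x,\cdot)$ by $\varphi(t_i,x_i,\cdot)$ on each cube up to an error tending to $0$, and on each $x$-cube use the change of variables $y=x/\ep$ together with the $\Z^d$-periodicity of $\varphi(t_i,x_i,\cdot)$ to turn the oscillating integral into the cube's volume times $\int_{Y_d}\varphi(t_i,x_i,y)^2\,dy$; summing and letting $\ep\to0$ gives the claim, and the same computation shows that $\|\varphi(\cdot,\cdot,\cdot/\ep)\|_{L^2((0,T)\times\R^d)}$ stays bounded uniformly in $\ep$.

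Next I would set up the weak-$*$ extraction. The space $\cV:=C^0_c([0,T]\times\R^d;C^0_\sharp(Y_d))$ is separable. Given the bounded sequence $u_\ep$ with $\|u_\ep\|_{L^2((0,T)\times\R^d)}\le M$, define the linear form $L_\ep(\varphi):=\int_{(0,T)\times\R^d}u_\ep(t,x)\,\varphi(t,x,x/\ep)\,dtdx$; by Cauchy--Schwarz and the oscillation lemma, $\limsup_{\ep\to0}|L_\ep(\varphi)|\le M\,\|\varphi\|_{L^2((0,T)\times\R^d\times Y_d)}$ for every $\varphi\in\cV$. A diagonal extraction over a countable dense subset of $\cV$, together with this uniform bound, yields a subsequence (not relabelled) and a linear form $L$ on $\cV$ with $L_\ep(\varphi)\to L(\varphi)$ and $|L(\varphi)|\le M\,\|\varphi\|_{L^2((0,T)\times\R^d\times Y_d)}$. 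Since $\cV$ is dense in the Hilbert space $L^2([0,T]\times\R^d;L^2_\sharp(Y_d))$, $L$ extends continuously to it, and the Riesz representation theorem produces $U\in L^2([0,T]\times\R^d;L^2_\sharp(Y_d))$ with $L(\varphi)=\int U\varphi$ for all $\varphi$; this is exactly \eqref{2scon}, so $u_\ep$ two-scale converges to $U$ along the subsequence.

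Finally, for the ``moreover'' statement I would upgrade \eqref{2scon} from $\varphi\in\cV$ to an arbitrary admissible $\psi$ satisfying \eqref{admtf}. Picking $\varphi_n\in\cV$ with $\varphi_n\to\psi$ in $L^2([0,T]\times\R^d;L^2_\sharp(Y_d))$, split $L_\ep(\psi)=L_\ep(\varphi_n)+\big(L_\ep(\psi)-L_\ep(\varphi_n)\big)$, note $L_\ep(\varphi_n)\to\int U\varphi_n\to\int U\psi$, and estimate $|L_\ep(\psi)-L_\ep(\varphi_n)|\le M\,\|(\psi-\varphi_n)(\cdot,\cdot,\cdot/\ep)\|_{L^2}$. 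Expanding $\|(\psi-\varphi_n)(\cdot,\cdot,\cdot/\ep)\|_{L^2}^2$ into $\|\psi(\cdot,\cdot,\cdot/\ep)\|_{L^2}^2-2\,(\psi(\cdot,\cdot,\cdot/\ep),\varphi_n(\cdot,\cdot,\cdot/\ep))+\|\varphi_n(\cdot,\cdot,\cdot/\ep)\|_{L^2}^2$, the first term converges by \eqref{admtf}, the third by the oscillation lemma, and the cross term by the oscillation lemma applied to the product (or to $(\psi+\varphi_n)^2$ and $(\psi-\varphi_n)^2$ by polarization), so $\limsup_{\ep\to0}\|(\psi-\varphi_n)(\cdot,\cdot,\cdot/\ep)\|_{L^2}\le C\,\|\psi-\varphi_n\|_{L^2}\to0$. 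I expect the main obstacle to be precisely this last step: admissibility in \eqref{admtf} is a statement about $\psi^2$ and must be transferred to the mixed quantities $\psi\varphi_n$ (equivalently, one must know that $\psi-\varphi_n$ is again admissible, i.e. that the class of admissible functions is stable under $L^2$-approximation by elements of $\cV$). Establishing that stability is the one point where I would have to be careful; everything else is a routine application of separability, Cauchy--Schwarz and Riesz representation.
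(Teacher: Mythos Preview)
The paper does not supply its own proof of this theorem: it is quoted from Allaire~\cite{All} (Theorem~1.2 and Remark~1.5 there) and invoked as a black box in the proof of Theorem~\ref{thm.SNChom}. There is therefore no argument in the paper to compare your proposal against.

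For completeness: your sketch is the standard proof from~\cite{All}. The oscillation lemma, the uniform Cauchy--Schwarz bound on $L_\ep$, the diagonal extraction over a countable dense subset of $\cV$, and Riesz representation are exactly Allaire's ingredients for the compactness part. Your identification of the delicate point in the ``moreover'' clause is also accurate, but neither of your two quick fixes works as written: the polarization identity is circular (showing that $\psi\pm\varphi_n$ is admissible again requires controlling the very cross term you are after), and your oscillation lemma was stated only for test functions in $\cV$, whereas $\psi\,\varphi_n$ lies merely in $C^0_c([0,T]\times\R^d;L^2_\sharp(Y_d))$. Allaire's actual resolution is to establish directly, for the concrete class $C^0_c([0,T]\times\R^d;L^2_\sharp(Y_d))$ appearing in Definition~\ref{def.2s}\,$b)$, an approximation by elements of $\cV$ that controls the oscillating $L^2$-norms $\|(\psi-\varphi_n)(\cdot,\cdot,\cdot/\ep)\|_{L^2}$ uniformly in $\ep$; once that is in hand, your three-term splitting goes through cleanly.
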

Let $b(y)\in C^1_\sharp(Y_d)^d$ be a vector field, let $u_0(x,y)\in C^0_c(\R^d;L^2_\sharp(Y_d))$ be an admissible function with compact support in $\R^d\times Y_d$, and let $f(t,x,y)\in C^0_c([0,T]\times\R^d;L^\infty_\sharp(Y_d))$ be an admissible function with compact support in $[0,T]\times\R^d\times Y_d$.
Consider the linear transport equation with oscillating data
\beq\label{tranequ}
\left\{\ba{ll}
\dis {\partial u_\ep\over\partial t}(t,x)-b(x/\ep)\cdot\nabla_x u_\ep(t,x)=f(t,x,x/\ep) & \mbox{in }(0,T)\times\R^d
\\ \ecart
u_\ep(0,x)=u_0(x,x/\ep) & \mbox{for }x\in\R^d,
\ea\right.
\eeq
which by \cite[Proposition~II.1, Theorem~II.2]{DiLi} has a unique solution in $L^\infty((0,T);L^2(\R^d))$.
\par
\noindent
We have the following criterion for the homogenization of equation \eqref{tranequ}.
\begin{atheo}\label{thm.SNChom}
Let $b$ be a divergence free vector field in $C^1_\sharp(Y_d)^d$, and let $X$ be the flow \eqref{bX} associated with~$b$.
Then, we have the equivalence of the two following assertions:
\begin{itemize}
\item[$(i)$] There exists $\zeta\in\R^d$ such that the flow $X$ satisfies the asymptotics
\beq\label{asyX}
\lim_{t\to\infty}{X(t,x)\over t}=\zeta,\;\;\mbox{a.e. }x\in Y_d,
\eeq
or, equivalently, $\sfD_b=\{\zeta\}$.
\item[$(ii)$] There exists $\zeta\in\R^d$ such that for any admissible functions $u_0(x,y)\in C^0_c(\R^d;L^2_\sharp(Y_d))$ with compact support in $[0,T]\times\R^d$, and $f(t,x,y)\in C^0_c([0,T]\times\R^d;L^\infty_\sharp(Y_d))$ with compact support in $[0,T]\times\R^d\times Y_d$, the solution $u_\ep$ to \eqref{tranequ} converges weakly in $L^\infty((0,T);L^2(\R^d))$ to the solution $u(t,x)$ to the transport equation
\beq\label{homtranequ}
\left\{\ba{ll}
\dis {\partial u\over\partial t}(t,x)-\zeta\cdot\nabla_x u(t,x)=\overline{f(t,x,\cdot)} & \mbox{in }(0,T)\times\R^d
\\ \ecart
u(0,x)=\overline{u_0(x,\cdot)} & \mbox{for }x\in\R^d.
\ea\right.
\eeq
\end{itemize}
Moreover, in both cases we have $\zeta=\overline{b}$.
\end{atheo}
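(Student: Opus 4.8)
The plan is to prove the two implications of the equivalence separately, both times using heavily that $\div b=0$.

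\medskip\noindent\emph{Preliminary remarks.} Since $b$ is divergence free, Proposition~\ref{pro.divcurl} applied with $\sigma_0\equiv1$ shows that Lebesgue's measure is invariant for $X$, so Theorem~\ref{thm.Db} (used with $\sigma_0\equiv1$) already provides the equivalence asserted inside $(i)$ between $\sfD_b=\{\zeta\}$ and the asymptotics~\eqref{asyX}, and it shows $\overline b\in\sfD_b$; hence $\zeta=\overline b$ as soon as $\sfD_b$ is known to be a singleton, which will settle the last sentence of the statement in both cases. Two further consequences of $\div b=0$ will be used repeatedly: testing \eqref{tranequ} with $u_\ep$ gives $\frac{1}{2}\frac{d}{dt}\int_{\R^d}u_\ep^2\,dx=\int_{\R^d}f(t,\cdot,\cdot/\ep)\,u_\ep\,dx$, whence a bound for $u_\ep$ in $L^\infty((0,T);L^2(\R^d))$ depending only on the data; and, by~\eqref{jacX}, each map $x\mapsto\ep\,X(t/\ep,x/\ep)$ is a measure-preserving $C^1$-diffeomorphism of $\R^d$. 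Finally, since $b\in C^1_\sharp(Y_d)^d$, the solution of \eqref{tranequ} admits the characteristic representation
\[
u_\ep(t,x)=u_0\big(\ep\,X(t/\ep,x/\ep),\,X(t/\ep,x/\ep)\big)+\int_0^t f\big(t-s,\,\ep\,X(s/\ep,x/\ep),\,X(s/\ep,x/\ep)\big)\,ds .
\]

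\medskip\noindent\emph{$(i)\Rightarrow(ii)$.} From the energy bound, $u_\ep$ is bounded in $L^2((0,T)\times\R^d)$, so up to a subsequence it two-scale converges to some $U\in L^2((0,T)\times\R^d;L^2_\sharp(Y_d))$, and then $u_\ep$ converges weakly to $u:=\overline{U(t,x,\cdot)}$. I would pass to the two-scale limit in the weak formulation of \eqref{tranequ} tested against $\Phi_0(t,x)+\ep\,\Phi_1(t,x,x/\ep)$, with $\Phi_0\in C^\infty_c([0,T)\times\R^d)$ and $\Phi_1\in C^\infty_c([0,T)\times\R^d;C^\infty_\sharp(Y_d))$, using the admissibility of $u_0$, $f$ and that $b(y)\cdot\nabla_x\Phi_0$ and $b(y)\cdot\nabla_y\Phi_1$ are legitimate test fields, the $O(\ep)$ terms vanishing by boundedness. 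Taking first $\Phi_0\equiv0$ gives $\int_{(0,T)\times\R^d\times Y_d}U(t,x,y)\,b(y)\cdot\nabla_y\Phi_1(t,x,y)=0$ for all $\Phi_1$, hence by Proposition~\ref{pro.divcurl} the signed measure $U(t,x,\cdot)\,dy$ is invariant for $X$ for a.e.\ $(t,x)$. Since $\sfD_b=\{\zeta\}$, the identity~\eqref{sixi} established in the proof of Theorem~\ref{thm.Db} — which holds for every signed invariant density in $L^1_\sharp(Y_d)$ precisely because the Birkhoff limit equals $\zeta$ a.e. — yields $\int_{Y_d}U(t,x,y)\,b(y)\,dy=\zeta\,u(t,x)$ for a.e.\ $(t,x)$. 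Reinserting this with $\Phi_1\equiv0$ into the limit identity reproduces exactly the weak formulation of \eqref{homtranequ} with drift $\zeta$, source $\overline{f(t,x,\cdot)}$ and datum $\overline{u_0(x,\cdot)}$; by uniqueness $u$ is its solution, and since the limit is subsequence-independent the whole sequence converges, giving $(ii)$ and, by the preliminary remark, $\zeta=\overline b$. The point of two-scale convergence here is that it handles general $y$-oscillating data $u_0(x,y)$, $f(t,x,y)$ in one stroke.

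\medskip\noindent\emph{$(ii)\Rightarrow(i)$.} Apply $(ii)$ with $f\equiv0$ and $u_0(x,y)=h(x)$ for $h\in C^\infty_c(\R^d)$: the representation formula gives $u_\ep(t,x)=h(\ep\,X(t/\ep,x/\ep))$, and $(ii)$ asserts that this converges weakly in $L^\infty((0,T);L^2(\R^d))$ to the solution $u(t,x)=h(x+t\zeta)$ of \eqref{homtranequ}. Because $x\mapsto\ep\,X(t/\ep,x/\ep)$ preserves Lebesgue's measure, $\|u_\ep(t,\cdot)\|_{L^2}=\|h\|_{L^2}=\|u(t,\cdot)\|_{L^2}$ for all $t$, so $u_\ep\to u$ \emph{strongly} in $L^2((0,T)\times\R^d)$. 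Letting $h$ run over a countable dense subset of $C^\infty_c(\R^d)$ and using $|\ep\,X(t/\ep,x/\ep)-x|\le t\,\|b\|_\infty$, a diagonal extraction produces $\ep_k\downarrow0$ with $\ep_k\,X(t/\ep_k,x/\ep_k)\to x+t\zeta$ for a.e.\ $(t,x)$. Fix $t_0\in(0,T)$ where this holds for a.e.\ $x$, and set $g_\tau(y):=(X(\tau,y)-y)/\tau$, which is $\Z^d$-periodic and bounded by $\|b\|_\infty$; the convergence reads $g_{\tau_k}(x/\ep_k)\to\zeta$ for a.e.\ $x$, with $\tau_k:=t_0/\ep_k\to\infty$. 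Integrating $|g_{\tau_k}(x/\ep_k)-\zeta|^2$ against a fixed nonnegative $\phi\in C^0_c(\R^d)$ with $\int\phi>0$, dominated convergence gives that this tends to $0$; but the elementary fact that $\int_{\R^d}\phi(x)\,\Psi(x/\ep)\,dx\to(\int_{\R^d}\phi)\,\overline{\Psi}$ as $\ep\to0$, with error controlled uniformly over $\Z^d$-periodic $\Psi$ sharing a common $L^\infty$ bound, then forces $\int_{Y_d}|g_{\tau_k}(y)-\zeta|^2\,dy\to0$, i.e.\ $g_{\tau_k}\to\zeta$ in $L^2(Y_d)$. Comparing with the a.e.\ (hence $L^2(Y_d)$) convergence of $g_\tau$ to the Birkhoff limit $\xi$ (Birkhoff's theorem with the invariant Lebesgue measure), I conclude $\xi=\zeta$ a.e., that is \eqref{asyX}, equivalently $\sfD_b=\{\zeta\}$ by Theorem~\ref{thm.Db}, and again $\zeta=\overline b$.

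\medskip\noindent\emph{Expected main obstacle.} The delicate point is the closing argument of $(ii)\Rightarrow(i)$: one has pointwise information on $\ep_k\,X(t_0/\ep_k,x/\ep_k)$ in which the second argument $x/\ep_k$ escapes to infinity, and it must be converted into the torus-level statement $\xi\equiv\zeta$ a.e.\ on $Y_d$; this is exactly where averaging of periodic functions against concentrating test functions, combined with the mere a.e.\ existence of the long-time limit $\xi$, is needed. It is also worth noting the atypical feature of the homogenization step: there is no corrector to construct, the ``cell equation'' being simply ${\rm div}_y(U\,b)=0$, and all the content is carried by the characterization~\eqref{sixi} of $\sfD_b=\{\zeta\}$.
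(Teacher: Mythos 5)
Your proof is correct, and the two directions deserve separate comments. For $(i)\Rightarrow(ii)$ you follow essentially the paper's route: energy bound, two-scale limit $U$, the cell relation ${\rm div}_y(U(t,x,\cdot)\,b)=0$ obtained from the test functions $\ep\,\varphi(t,x)\,\psi(x/\ep)$, and the identification $\overline{U(t,x,\cdot)\,b}=\zeta\,\overline{U(t,x,\cdot)}$ from $\sfD_b=\{\zeta\}$; the only cosmetic difference is that you invoke identity \eqref{sixi} for signed invariant densities where the paper decomposes $U$ into its positive and negative parts and applies Lemma~\ref{lem.invdenmea}, which amounts to the same thing. The converse is where you genuinely depart from the paper. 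The paper takes an arbitrary invariant density $v\in L^\infty_\sharp(Y_d)$, exhibits the exact oscillating solution $u_\ep(t,x)=\theta(x+t\,\zeta)\,v(x/\ep)$ of \eqref{tranequ} for a suitable oscillating source, reads off $\overline{v\,b}=\overline{v}\,\zeta$ by comparing the equation satisfied by the weak limit $\theta(x+t\,\zeta)\,\overline{v}$ with \eqref{homtranequ}, and extends to all of $\sfD_b$ by truncation. You instead use only $y$-independent data $u_0=h(x)$, $f=0$, upgrade weak to strong convergence via conservation of the $L^2$ norm (this is where ${\rm div}\,b=0$ enters, through measure preservation of the flow), extract from a countable dense family of $h$'s the a.e.\ convergence $\ep_k\,X(t/\ep_k,x/\ep_k)\to x+t\,\zeta$, and transfer it to $L^2(Y_d)$-convergence of $(X(\tau_k,\cdot)-\cdot)/\tau_k$ to $\zeta$ by your uniform averaging estimate, thereby identifying the Birkhoff limit and obtaining \eqref{asyX} directly; $\sfD_b=\{\zeta\}$ then follows from Theorem~\ref{thm.Db}. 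I checked the averaging step: for $\phi\in C^0_c(\R^d)$ the error in $\int\phi(x)\,\Psi(x/\ep)\,dx-\overline{\Psi}\int\phi$ is controlled by the modulus of continuity of $\phi$ uniformly over periodic $\Psi$ with a common $L^\infty$ bound, so the argument closes. Your version buys a slightly stronger statement (homogenization for non-oscillating data alone already forces $(i)$), at the price of the chain of extractions and the averaging lemma, whereas the paper's oscillating test data land on $\overline{v\,b}=\overline{v}\,\zeta$ in one step; in both cases $\zeta=\overline{b}$ follows, as you note, from $\overline{b}\in\sfD_b$.
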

\begin{proof}{ of Theorem~\ref{thm.SNChom}}
\par\smallskip\noindent
{\it $(i)\Rightarrow(ii).$}
First of all, note that, since $b$ is divergence free in $\R^d$, by Proposition~\ref{pro.divcurl} Lebesgue's measure is an invariant probability measure for the flow~$X$ associated with $b$, which implies that $\overline{b}\in\sfD_b=\{\zeta\}$ and $\zeta=\overline{b}$.
\par
Now, let $u_0(x,y)\in C^0_c(\R^d;L^2_\sharp(Y_d))$ be an admissible function with compact support in $[0,T]\times\R^d$, and let $f(t,x,y)\in C^0_c([0,T]\times\R^d;L^\infty_\sharp(Y_d))$ be an admissible function whose support is contained in $[0,T]\times K$, $K$ being a compact set of $\R^d$.
\par\noindent
Denote $b_\ep(x):=b(x/\ep)$ which is divergence free in $\R^d$, and denote $f_\ep(t,x):=f(t,x,x/\ep)$ which is uniformly bounded in $[0,T]\times\R^d$ and is compactly supported in $[0,T]\times K$. Formally, multiplying \eqref{tranequ} by $u_\ep(t,x)$, integrating by parts over $\R^d$ and using Cauchy-Schwarz inequality, we get that for any $t\in(0,T)$,
\[
\ba{l}
\dis {1\over 2}\,{d\over dt}\left(\int_{\R^d}u_\ep^2(t,x)\,dx\right)={1\over 2}\,{d\over dt}\left(\int_{\R^d}u_\ep^2(t,x)\,dx\right)-{1\over 2}\,\int_{\R^d}{\rm div}(b_\ep)(x)\,u_\ep^2(t,x)\,dx
\\ \ecart
\dis =\int_{K}f_\ep(t,x)\,u_\ep(t,x)\,dx\leq C_f\left(\int_{\R^d}u_\ep^2(t,x)\,dx\right)^{1/2},
\ea
\]
where $C_f$ is a non negative constant only depending on $f$.
This can be justified following the proof of \cite[Proposition~II.1]{DiLi}.
Hence, we deduce the estimate 
\beq\label{estue}
\|u_\ep(t,\cdot)\|_{L^2(\R^d)}\leq \|u_\ep(0,\cdot)\|_{L^2(\R^d)}+C_f\,T\quad\mbox{a.e. }t\in(0,T).
\eeq
Therefore, estimate \eqref{estue} combined with (recall that the admissible function $\psi(t,x,y)=u_0(x,y)$ satisfies~\eqref{admtf})
\[
\lim_{\ep\to 0}\|u_\ep(0,\cdot)\|_{L^2(\R^d)}=\|u_0(x,y))\|_{L^2(\R^d\times Y_d)},
\]
implies that the sequence $u_\ep$ is bounded in $L^\infty((0,T);L^2(\R^d))$.
Then, up to a subsequence, $u_\ep(t,x)$ two-scale converges to some function $U(t,x,y)\in L^2([0,T]\times\R^d;L^2_\sharp(Y_d))$, and $u_\ep(t,x)$ converges weakly in $L^2((0,T)\times\R^d)$ to the mean
\beq\label{uU}
u(t,x):=\overline{U(t,x,\cdot)}=\int_{Y_d}U(t,x,y)\,dy\quad\mbox{for a.e. }(t,x)\in(0,T)\times\R^d.
\eeq
\par
Next, we follow the two-scale procedure of the proof of \cite[Theorem~2.1]{HoXi}. Putting the test function $\varphi(t,x)\in C^1_c([0,T)\times\R^d)$ in the weak formulation of \eqref{tranequ}, and integrating by parts we have
\[
\ba{l}
\dis -\int_{(0,T)\times\R^d}{\partial\varphi\over\partial t}(t,x)\,u_\ep(t,x)\,dtdx-\int_{\R^d}\varphi(0,x)\,u_0(x,x/\ep)\,dx
\\ \ecart
\dis +\int_{(0,T)\times\R^d} b(x/\ep)\cdot\nabla_x\varphi(t,x)\,u_\ep(t,x)\,dtdx=\int_{\R^d}\varphi(t,x)\,f(t,x,x/\ep)\,dtdx.
\ea
\]
Then, passing to the two-scale limit and using that $u_0(x,y)$ and $f(t,x,y)$ are admissible functions for two-scale convergence, we get that
\[
\ba{l}
\dis -\int_{(0,T)\times\R^d\times Y_d}{\partial\varphi\over\partial t}(t,x)\,U(t,x,y)\,dtdxdy-\int_{\R^d\times Y_d}\varphi(0,x)\,u_0(x,y)\,dxdy
\\ \ecart
\dis +\int_{(0,T)\times\R^d\times Y_d} b(y)\cdot\nabla_x\varphi(t,x)\,U(t,x,y)\,dtdxdy=\int_{(0,T)\times\R^d\times Y_d}\varphi(t,x)\,f(t,x,y)\,dtdxdy,
\ea
\]
or, equivalently, by Fubini's theorem
\beq\label{2shomequ}
\ba{l}
\dis -\int_{(0,T)\times\R^d}{\partial\varphi\over\partial t}(t,x)\,u(t,x)\,dx-\int_{\R^d}\varphi(0,x)\,\overline{u_0(x,\cdot)}\,dx
\\ \ecart
\dis +\int_{(0,T)\times\R^d}\overline{U(t,x,\cdot)\,b}\cdot\nabla_x\varphi(t,x)\,\,dtdx=\int_{\R^d\times Y_d}\varphi(t,x)\,\overline{f(t,x,\cdot)}\,dtdx.
\ea
\eeq
Similarly, passing to the two-scale limit with the admissible test function $\ep\,\varphi(t,x)\,\psi(x/\ep)$ for any $\varphi(t,x)\in C^1_c([0,T)\times\R^d)$ and any $\psi\in C^1_\sharp(Y_d)$, we get that
\[
\ba{l}
\dis \int_{(0,T)\times\R^d\times Y_d} \varphi(t,x)\,b(y)\cdot\nabla_y\psi(y)\,U(t,x,y)\,dtdxdy
\\ \ecart
\dis =\int_{(0,T)\times\R^d}\varphi(t,x)\left(\int_{Y_d}U(t,x,y)\,b(y)\cdot\nabla_y\psi(y)\,dy\right)dtdx=0,
\ea
\]
which by Proposition~\ref{pro.divcurl} implies that
\beq\label{Ubdiv0}
{\rm div}_y(U(t,x,\cdot)\,b)=0\;\;\mbox{in }\cD'(\R^d),\quad\mbox{a.e. }(t,x)\in(0,T)\times\R^d.
\eeq
Then, applying Lemma~\ref{lem.invdenmea} with $\sigma_0=1$, for a.e. $(t,x)\in(0,T)\times\R^d$, the function $U(t,x,\cdot)$ is an invariant function for the flow $X$ associated with $b$ related to Lebesgue's measure, and so are the positive and negative parts $U^\pm(t,x,\cdot)$ of $U(t,x,\cdot)$. Hence, again by Lemma~\ref{lem.invdenmea} the measures $U^\pm(t,x,y)\,dy$ are invariant for $X$, which by the definition~\eqref{Db} of $\sfD_b=\{\zeta\}$, implies that
\beq\label{Upm}
\overline{U^\pm(t,x,\cdot)\,b}=\int_{Y_d}b(y)\,U^\pm(t,x,y)\,dy=\left(\int_{Y_d}U^\pm(t,x,y)\,dy\right)\zeta\quad\mbox{a.e.}\,(t,x)\in(0,T)\times\R^d.
\eeq
From \eqref{Upm} and \eqref{uU} we deduce that
\beq\label{Uuzeta}
\overline{U(t,x,\cdot)\,b}=\overline{U(t,x,\cdot)}\;\zeta=u(t,x)\,\zeta\quad\mbox{a.e. }(t,x)\in(0,T)\times\R^d.
\eeq
Putting this equality in the weak formulation \eqref{2shomequ} we get that for any $\varphi(t,x)\in C^1_c([0,T)\times\R^d)$,
\[
\ba{l}
\dis -\int_{(0,T)\times\R^d}{\partial\varphi\over\partial t}(t,x)\,u(t,x)\,dx-\int_{\R^d}\varphi(0,x)\,\overline{u_0(x,\cdot)}\,dx
\\ \ecart
\dis +\int_{(0,T)\times\R^d}u(t,x)\,\zeta\cdot\nabla_x\varphi(t,x)\,\,dtdx=\int_{\R^d\times Y_d}\varphi(t,x)\,\overline{f(t,x,\cdot)}\,dtdx,
\ea
\]
which is the weak formulation of the homogenized transport equation \eqref{homtranequ}.
\par\medskip\noindent
{\it $(ii)\Rightarrow(i)$.} First of all, note that the set $\sfD_b$ contains the mean $\overline{b}$, since by the free divergence of $b$ and by Proposition~\ref{pro.divcurl}, Lebesgue's measure is an invariant probability measure for the flow~$X$ associated with $b$.
\par
Now, let us prove that any invariant probability measure $\sigma(x)\,dx$ with $\sigma\in L^1_\sharp(Y_d)$, for the flow $X$ satisfies the equality $\overline{\sigma\,b}=\zeta$, which will yield the desired equality $\sfD_b=\{\zeta\}$.
To this end, let us first show this for any invariant probability measure $v(x)/\overline{v}\,dx$ with $v\in L^\infty_\sharp(Y_d)$.
By virtue of Proposition~\ref{pro.divcurl} such a function $v$ is solution to the equation
\beq\label{divvb=0}
{\rm div}(v\,b)=b\cdot\nabla v=0\quad\mbox{in }\cD'(\R^d).
\eeq
Let $\theta\in C^1_c(\R^d)$, and define for $\ep>0$ the function $u_\ep\in C^1([0,T];C^1_c(\R^d))$ by
\[
u_\ep(t,x):=\theta(x+t\,\zeta)\,v(x/\ep)\quad\mbox{for }(t,x)\in[0,T]\times\R^d,
\]
where $\zeta$ is the vector involving in the homogenized equation \eqref{homtranequ}.
By \eqref{divvb=0} we have
\[
\ba{l}
\dis {\partial u_\ep\over\partial t}(t,x)-b(x/\ep)\cdot\nabla_x u_\ep(t,x)
\\ \ecart
\dis =v(x/\ep)\,\zeta\cdot\nabla_x\theta(x+t\,\zeta)-v(x/\ep)\,b(x/\ep)\cdot\nabla_x\theta(x+t\,\zeta)-1/\ep\,\theta(x+t\,\zeta)\,(b\cdot\nabla_y v)(x/\ep)
\\ \ecart
\dis =\big(v(x/\ep)\,\zeta-(v\,b)(x/\ep)\big)\cdot\nabla_x\theta(x+t\,\zeta)=f(t,x,x/\ep),
\ea
\]
where
\[
f(t,x,y):=\big(v(y)\,\zeta-(v\,b)(y)\big)\cdot\nabla_x\theta(x+t\,\zeta)\quad\mbox{for }(t,x,y)\in[0,T]\times\R^d\times Y_d,
\]
is an admissible function in $C^0_c([0,T]\times\R^d;L^\infty_\sharp(Y_d))$ with compact support in $[0,T]\times\R^d\times Y_d$.
Moreover, we have $u_\ep(0,x)=\th(x)\,v(x/\ep)$ for $x\in\R^d$, where $\theta(x)\,v(y)\in C^0_c(\R^d;L^2_\sharp(Y_d))$ with compact support in $[0,T]\times\R^d$ is also an admissible function.
Hence, by the homogenization assumption the sequence $u_\ep(t,x)$ converges weakly in $L^2((0,T)\times\R^d)$ to $u(t,x)=\theta(x+t\,\zeta)\,\overline{v}$ solution to the homogenized equation \eqref{homtranequ}, {\em i.e.}
\[
\forall\,(t,x)\in [0,T]\times\R^d,\quad {\partial u\over\partial t}(t,x)-\zeta\cdot\nabla_x u(t,x)=\overline{f(t,x,\cdot)}=\big(\overline{v}\,\zeta-\overline{v\, b}\big)\cdot\nabla_x\theta(x+t\,\zeta).
\]
But directly from the expression $u(t,x)=\theta(x+t\,\zeta)\,\overline{v}$, we also deduce that
\[
\forall\,(t,x)\in [0,T]\times\R^d,\quad  {\partial u\over\partial t}(t,x)-\zeta\cdot\nabla_x u(t,x)=0.
\]
Equating the two former equations we get that for any $\theta\in C^1_c(\R^d)$,
\[
\forall\,(t,x)\in[0,T]\times\R^d,\quad \big(\overline{v}\,\zeta-\overline{v\, b}\big)\cdot\nabla_x\theta(x+t\,\zeta)=0,
\]
which implies that
\beq\label{vbze}
\overline{v\,b}=\overline{v}\,\zeta.
\eeq
\par
Now, let $\sigma$ be a non negative function in $L^1_\sharp(Y_d)$ with $\overline{\sigma}=1$, such that $\sigma(x)\,dx$ is an invariant measure for the flow $X$, or, equivalently, by Lemma~\ref{lem.invdenmea} applied with $\sigma_0=1$, the function $\sigma$ is invariant for $X$ with respect to Lebesgue's measure. Hence, for any $n\in\N$, the truncated function $\sigma\wedge n$ is also invariant for $X$. Equality \eqref{vbze} applied with $v=\sigma\wedge n\in L^\infty_\sharp(Y_d)$, yields
\[
\overline{(\sigma\wedge n)\,b}=\overline{\sigma\wedge n}\;\zeta\;\mathop{\longrightarrow}_{n\to\infty}\;\overline{\sigma\,b}=\overline{\sigma}\,\zeta=\zeta.
\]
Thus, we obtain the desired equality $\sfD_b=\{\zeta\}=\{\overline{b}\}$, which owing to Theorem~\ref{thm.Db} concludes the proof of Theorem~\ref{thm.SNChom}.
\end{proof}
\begin{arem}\label{rem.HY}
From equation \eqref{Ubdiv0} Hou and Xin~\cite{HoXi} used the ergodicity of the flow $X$ to deduce that $U(t,x,\cdot)$ is constant a.e. $(t,x)\in(0,T)\times\R^d$.
However, this condition is not necessary.
Indeed, the less restrictive condition used in the above proof is that $\sfD_b$ is reduced to the unit set~$\{\zeta\}$.
This combined with Lemma~\ref{lem.invdenmea} on invariant measures and functions leads us to equality \eqref{Uuzeta}, and allows us to conclude.
\end{arem}
%%%%%%%%%%
\section{Comparison between the seven conditions}
%involving asymptotic of the flow, rotation sets and homogenization of the transport equation}
\noindent
In the sequel we denote:
\par\medskip\noindent
\begin{tabular}{ll}
{\it Rec} & if there exist a $C^2$-diffeomorphism $\Psi$ on $Y_d$ and $\xi\in\R^d$ such that $\nabla\Psi\,b=\xi$ in $Y_d$.
\\*[.1cm]
{\it Erg} & if the ergodic condition~\eqref{Xerg} holds with an invariant probability measure for $X$,
\\
& which is absolutely continuous with respect to Lebesgue's measure,
\\*[.1cm]
{\em Asy-a.e.} & if there exist $\zeta\in\R^d$ such that $\; \dis\lim_{t\to\infty}{X(t,x)/t}=\zeta,\;$ a.e. $x\in Y_d$.
\\*[.1cm]
{\em Asy-e.} & if there exist $\zeta\in\R^d$ such that $\; \dis\lim_{t\to\infty}{X(t,x)/t}=\zeta,\;\;\forall\,x\in Y_d$.
\\*[.1cm]
$\#\sfC_b\!=\!1$ & if the unit set condition holds for Herman's set $\sfC_b$.
\\*[.1cm]
$\#\sfD_b\!=\!1$ & if the unit set condition holds for the set $\sfD_b$.
\\*[.1cm]
{\em Hom} & if the homogenized equation \eqref{homtranequ} holds when $b$ is divergence free in $\R^d$.
\end{tabular}
\begin{atheo}\label{thm.7con}
Let $b\in C^1_\sharp(Y_d)^d$ be a non null but possibly vanishing vector field such that there exists an invariant probability measure $\sigma_0(x)\,dx$ with $\sigma_0\in L^1_\sharp(Y_d)$, for the flow $X$ associated with $b$, or, equivalently, $\sfD_b\neq\mbox{{\rm\O}}$.
Then, we have a complete array (see Figure~\ref{array} below) of all the logical connections between the above seven conditions, in which:
\begin{itemize}
\item[-] A grey square means a tautology.
\\*[-.8cm]
\item[-] A square with $\Leftarrow$ means that the condition of the top line implies the condition of the left column, but not the converse in general.
\\*[-.8cm]
\item[-] A square with $\Uparrow$ means that the condition of the left column implies the condition of the top line, but not the converse in general.
\\*[-.8cm]
\item[-] A square with $\Leftrightarrow$ or $\Updownarrow$ means that the conditions of the top line and of the left column are equivalent.
\\*[-.8cm]
\item[-] A dark square means that the conditions of the top line and the left column cannot be compared in general.
\\*[-.8cm]
\item[-] Finally, if a square involves condition~{\it Hom}, then the other condition must be considered under the assumption that $b$ is divergence free in $\R^d$. 
\end{itemize}
\end{atheo}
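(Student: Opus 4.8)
The plan is to assemble the array block by block, using the already-established results to collapse the seven conditions into four equivalence classes and then comparing only those. By Proposition~\ref{pro.Cb}, \emph{Asy-e.}\ $\Leftrightarrow$ $\#\sfC_b=1$; the standing hypothesis makes Theorem~\ref{thm.Db} applicable (with an a.e.\ positive invariant density $\sigma_0$), so \emph{Asy-a.e.}\ $\Leftrightarrow$ $\#\sfD_b=1$; and by Theorem~\ref{thm.SNChom}, when $b$ is divergence free, \emph{Hom}\ is equivalent to those last two. It then suffices to settle the mutual status of the four blocks $\{\emph{Rec}\}$, $\{\emph{Erg}\}$, $\{\emph{Asy-e.}\}$, $\{\emph{Asy-a.e.}\}$; all remaining squares of Figure~\ref{array} follow by transitivity and by the trivial inclusion $\sfD_b\subseteq\sfC_b$.

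Next I would prove the three cross-block implications \emph{Rec}\ $\Rightarrow$ \emph{Asy-e.}\ $\Rightarrow$ \emph{Asy-a.e.}\ and \emph{Erg}\ $\Rightarrow$ \emph{Asy-a.e.} For the first, if $\nabla\Psi\,b=\xi$ on $Y_d$ then $t\mapsto\Psi(X(t,x))$ has derivative $\xi$, whence $\Psi(X(t,x))=\Psi(x)+t\,\xi$; lifting $\Psi$ and $\Psi^{-1}$ to $\R^d$ with $\Psi^{\pm1}(z)-z$ bounded gives $X(t,x)/t\to\xi$ for every $x$, as in \cite{Bri2,BrHe}. The implication \emph{Asy-e.}\ $\Rightarrow$ \emph{Asy-a.e.}\ is immediate with the same $\zeta$ (equivalently, $\sfC_b=\{\zeta\}$ forces $\sfD_b\subseteq\{\zeta\}$, and $\sfD_b\neq\emptyset$ then gives $\#\sfD_b=1$). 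For \emph{Erg}\ $\Rightarrow$ \emph{Asy-a.e.}, if the flow is ergodic for an a.c.\ invariant probability $\mu$ one first checks that the density of $\mu$ is a.e.\ positive, so that $\xi(x):=\lim_t X(t,x)/t$ exists a.e.\ on $Y_d$ (as at the start of the proof of Theorem~\ref{thm.Db}); Birkhoff's theorem applied to the components of $b$ then yields $\xi=\overline b^{\,\mu}$ $\mu$-a.e., hence a.e., so \emph{Asy-a.e.}\ holds with $\zeta=\overline b^{\,\mu}$. Together these give every ``$\Rightarrow$'' entry of the array.

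The strictness of these implications and the incomparabilities $\{\emph{Rec},\emph{Erg}\}$, $\{\emph{Erg},\emph{Asy-e.}\}$ would then be witnessed by a short list of explicit flows. The $d$-dimensional Stepanoff flow (Example~\ref{exa.OMS}, Proposition~\ref{pro.OMS}; see also Example~\ref{exa.rhooDu}, Proposition~\ref{pro.CbneDb}) is ergodic for an a.c.\ measure and has $\#\sfD_b=1$, while $\sfC_b$ is a nondegenerate segment and $b$ has zeros: this gives \emph{Erg}\ and \emph{Asy-a.e.}\ but not \emph{Asy-e.}, not $\#\sfC_b=1$, not \emph{Rec}. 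The constant field $b=e_1$ on $Y_2$ (divergence free) satisfies \emph{Rec}\ (take $\Psi=\mathrm{id}$), \emph{Asy-e.}\ and $\sfD_b=\{e_1\}$, but is not ergodic: this gives \emph{Rec}, \emph{Asy-e.}, \emph{Hom}\ but not \emph{Erg}. A divergence-free cellular flow on $Y_2$ all of whose orbits are bounded --- e.g.\ the Hamiltonian flow with stream function $\sin(2\pi x_1)\sin(2\pi x_2)$ --- satisfies \emph{Asy-e.}\ with $\zeta=0$ (hence also $\#\sfC_b=1$, \emph{Asy-a.e.}, $\#\sfD_b=1$, \emph{Hom}), but its zero set is nonempty so \emph{Rec}\ fails; if the square joining \emph{Hom}\ and \emph{Asy-e.}\ is moreover one-directional, a further divergence-free flow with $\#\sfD_b=1<\#\sfC_b$ is used for that single entry. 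Collating these realizes exactly the dark and arrow squares of Figure~\ref{array}.

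The main obstacle is the implication \emph{Erg}\ $\Rightarrow$ \emph{Asy-a.e.}, more precisely the assertion that an ergodic a.c.\ invariant probability measure has an a.e.\ positive density --- equivalently, that the flow cannot be ergodic on a proper, positive-measure, flow-invariant region while having a different asymptotic rotation on the complement --- which is what makes Theorem~\ref{thm.Db} and Lemma~\ref{lem.invdenmea} applicable; once this is granted the ergodic theorem finishes the argument. The rest is organizational: verifying that the handful of sample flows (together with Proposition~\ref{pro.OMS} for the Stepanoff case) realize \emph{every} one of the non-trivial squares without a gap, and carefully tracking the divergence-free restriction wherever \emph{Hom}\ occurs.
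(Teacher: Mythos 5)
Your architecture is the paper's: collapse the seven conditions into four blocks via Proposition~\ref{pro.Cb}, Theorem~\ref{thm.Db} and Theorem~\ref{thm.SNChom}, prove \emph{Rec} $\Rightarrow$ \emph{Asy-e.} $\Rightarrow$ \emph{Asy-a.e.} and \emph{Erg} $\Rightarrow$ \emph{Asy-a.e.}, and separate the blocks with the Stepanoff flow and the constant field $b=e_1$. Two squares, however, are left genuinely open. The first concerns the \emph{Hom} row and column, where every witness must be \emph{divergence free}: to show that \emph{Hom} does not imply \emph{Asy-e.} (equivalently, that a divergence-free $b$ may have $\#\sfD_b=1$ while $\#\sfC_b>1$) you cannot use the Stepanoff field $b_S=\rho_S\,\xi$, since ${\rm div}\,b_S=\xi\cdot\nabla\rho_S\not\equiv 0$, nor the field $\rho_0\,R_\perp\nabla u$ of Example~\ref{exa.rhooDu}, which is not divergence free either; your sentence ``if the square \dots is moreover one-directional, a further divergence-free flow \dots is used'' concedes the need without supplying the flow. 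The paper closes this square with Oxtoby's example (Example~\ref{exa.OMS} and Proposition~\ref{pro.OMS}): a divergence-free analytic planar field, ergodic with respect to Lebesgue measure, with a stationary point and $\sfC_b=[0,\zeta]$, $\zeta\neq 0$.

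The second gap is the step you yourself call ``the main obstacle'': the assertion that the density $\sigma$ of the ergodic absolutely continuous invariant measure is a.e.\ positive, which you use to upgrade Birkhoff's conclusion from $\mu$-a.e.\ to Lebesgue-a.e.\ in \emph{Erg} $\Rightarrow$ \emph{Asy-a.e.} You give no argument for it, and it does not follow from ergodicity alone: the set $\{\sigma>0\}$ is flow-invariant up to Lebesgue-null sets but carries full $\mu$-measure, so ergodicity with respect to $\mu$ gives no control on its complement, and Birkhoff's theorem for $\mu$ only yields the limit $\overline{b}^{\mu}$ Lebesgue-a.e.\ on $\{\sigma>0\}$. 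The paper disposes of this implication in one line by invoking Birkhoff's theorem for the ergodic measure itself and does not route the argument through your positivity claim; if you keep your route you must actually prove that claim (or otherwise bridge from ``$\mu$-a.e.''\ to ``Lebesgue-a.e.'', e.g.\ by combining the Lebesgue-a.e.\ defined invariant limit function produced by the a.e.\ positive density $\sigma_0$ of the standing hypothesis with an argument that it is constant off $\{\sigma>0\}$ as well). As written, this step is a hole. The remaining items --- the rectification computation for \emph{Rec} $\Rightarrow$ \emph{Asy-e.}, the inclusion $\sfD_b\subseteq\sfC_b$, the cellular flow showing \emph{Asy-e.} without \emph{Rec}, and $b=e_1$ showing \emph{Rec} and \emph{Hom} without \emph{Erg} --- match the paper's argument and are fine.
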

%\par\bigskip\noindent
\renewcommand{\arraystretch}{3.8}
\vskip -1.cm
\begin{figure}[t!]
\bigskip
{\footnotesize\centerline{
\begin{tabularx}{.82\textwidth}
{|>{\centering\arraybackslash}X|>{\centering\arraybackslash}X|>{\centering\arraybackslash}X|>{\centering\arraybackslash}X|>{\centering\arraybackslash}X|>{\centering\arraybackslash}X|>{\centering\arraybackslash}X|>{\centering\arraybackslash}X|}
\hline
& {\it Rec} & {\it Erg} & {\em Asy-a.e.} & {\em Asy-e.} & $\#\sfC_b\!=\!1$ & $\#\sfD_b\!=\!1$ & {\em Hom}
\\
\hline
{\it Rec} & \cellcolor{lightgray} & \cellcolor{darkgray} & $\Uparrow$ & $\Uparrow$ & $\Uparrow$ & $\Uparrow$ & $\Uparrow$
\\
\hline
{\it Erg} & \cellcolor{darkgray} & \cellcolor{lightgray} & $\Uparrow$ & \cellcolor{darkgray} &\cellcolor{darkgray} & $\Uparrow$ & $\Uparrow$
\\
\hline
{\em Asy-a.e.} & $\Leftarrow$ & $\Leftarrow$ & \cellcolor{lightgray} & $\Leftarrow$ & $\Leftarrow$ & $\Leftrightarrow$ & $\Leftrightarrow$
\\
\hline
{\em Asy-e.} & $\Leftarrow$ & \cellcolor{darkgray} & $\Uparrow$ & \cellcolor{lightgray} & $\Leftrightarrow$ & $\Uparrow$ & $\Uparrow$
\\
\hline
$\#\sfC_b\!=\!1$ & $\Leftarrow$ & \cellcolor{darkgray} & $\Uparrow$ & $\Updownarrow$ & \cellcolor{lightgray} & $\Uparrow$ & $\Uparrow$
\\
\hline
$\#\sfD_b\!=\!1$ & $\Leftarrow$ & $\Leftarrow$ & $\Updownarrow$ & $\Leftarrow$ & $\Leftarrow$ & \cellcolor{lightgray} & $\Leftrightarrow$
\\
\hline
{\em Hom} & $\Leftarrow$ & $\Leftarrow$ & $\Updownarrow$ & $\Leftarrow$ & $\Leftarrow$ & $\Updownarrow$ & \cellcolor{lightgray}
\\
\hline
\end{tabularx}
}}
\bigskip
\caption{Logical connections between the seven conditions}\label{array}
\end{figure}
\par\bigskip\noindent
%$\boxed{12}$
\renewcommand{\arraystretch}{0}
\begin{arem}\label{rem.Dbe}
We may have both $\#\sfC_b=1$ and $\sfD_b=\mbox{{\rm\O}}$.
\par\noindent
To this end, consider a gradient field $b=\nabla u$ with $u\in C^2_\sharp(Y_d)$, such that $\nabla u\neq 0$ a.e. in~$Y_d$.
On the one hand, by virtue of \cite[Proposition~2.4]{BrHe} we have $\sfC_b=\{0\}$.
On the other hand, assume that there exists an invariant probability measure $\sigma(x)\,dx$ with $\sigma\in L^1_\sharp(Y_d)$, for the flow associated with $\nabla u$. Then, by virtue of Proposition~\ref{pro.divcurl} we have
\[
\int_{Y_d}\sigma(x)\,|\nabla u(x)|^2\,dx=\int_{Y_d}\sigma(x)\,\nabla u(x)\cdot\nabla u(x)\,dx=0,
\]
which implies that $\sigma=0$ a.e. in $Y_d$, a contradiction with $\overline{\sigma}=1$.
Therefore, we get that $\sfD_b=\mbox{{\rm\O}}$.
\end{arem}
\begin{proof}{ of Theorem~\ref{thm.7con}}
\par\smallskip\noindent
{\it Condition Rec.} By virtue of \cite[Corollary~4.1]{Bri2} condition~{\it Rec} implies condition~$\#\sfC_b\!=\!1$ which by Proposition~\ref{pro.Cb} is equivalent to condition~{\em Asy-e.}. Moreover, condition~{\em Asy-e.} clearly implies condition~{\em Asy-a.e.} which by Theorem~\ref{thm.Db} is equivalent to condition~$\#\sfD_b\!=\!1$, and by Theorem~\ref{thm.SNChom} is equivalent to condition~{\em Hom}. Therefore, condition {\it Rec} implies condition~{\em Asy-a.e.}, condition~{\em Asy-e.}, condition~$\#\sfC_b\!=\!1$, condition~$\#\sfD_b\!=\!1$, and condition~{\em Hom}.
\par
On the other hand, note that if the vector field $b$ vanishes, then condition~{\it Rec} cannot hold true. Otherwise, in equality $\nabla\Psi\,b=\zeta$ the constant vector $\zeta$ is necessarily nul, hence due to the invertibility of $\nabla\Psi$, $b$ is the nul vector field, which yields a contradiction.
Therefore, since all other conditions may be satisfied with a vanishing vector field $b$ according to the examples of \cite[Section~4]{BrHe} combined with Theorem~\ref{thm.Db} and Theorem~\ref{thm.SNChom}, condition~{\it Rec} cannot be deduced in general from any of the other six conditions.
\par\medskip\noindent
{\it Conditions {\it Rec} and {\it Erg} cannot be compared.} \cite[Corollary~4.1]{Bri2} provides a two-dimensional and a three-dimensional example in which condition~{\it Rec} holds true, but not condition~{\it Erg}.
\par\medskip\noindent
{\it Condition Erg.} By virtue of Birkhoff's theorem condition~{\it Erg} implies condition~{\em Asy-a.e.} which is equivalent to condition~$\#\sfD_b\!=\!1$ (by Theorem~\ref{thm.Db}) and is equivalent to condition~{\em Hom} (by Theorem~\ref{thm.SNChom}).
\par\medskip\noindent
{\it Conditions {\it Erg} and $\#\sfC_b\!=\!1$ cannot be compared.}
Since condition~{\it Rec} implies $\#\sfC_b\!=\!1$, but condition~{\it Rec} does not imply in general condition~{\it Erg} (by \cite[Corollary~4.1]{Bri2}), by a transitivity argument condition~$\#\sfC_b\!=\!1$ does not imply in general condition~{\it Erg}.
\par
On the other hand, extending the two-dimensional results of Oxtoby~\cite{Oxt} and Marchetto~\cite{Mar} to any dimension by a different approach, Example~\ref{exa.OMS} and Proposition~\ref{pro.OMS} below deal with a $d$-dimensional Stepanoff flow \cite[Section~4]{Ste} defined by
\beq\label{Sflow}
\left\{\ba{ll}
\dis {\partial S\over\partial t}(t,x)=b_S(S(t,x))=\rho_S(S(t,x))\,\xi, & t\in\R
\\ \ecart
S(0,x)=x\in\R^d, &
\end{array}\right.
\eeq
where $\rho_S$ is a non negative function in $C^1_\sharp(Y_d)$ with a finite positive number of roots in $Y_d$ and $\sigma_S:=1/\rho_S\in L^1_\sharp(Y_d)$, and where $\xi$ is a constant vector of $\R^d$ with incommensurable coordinates.
Under these conditions $\sigma_S(x)/\overline{\sigma_S}\;dx$ is the unique invariant probability measure on $Y_d$ for the flow $S$, which does not load the zero set of $\rho_S$, and $S$ is ergodic with respect to the measure $\sigma_S(x)/\overline{\sigma_S}\;dx$. Hence, condition {\it Erg} holds true with the probability measure $\sigma_S(x)/\overline{\sigma_S}\;dx$.
Moreover, Proposition~\ref{pro.OMS} shows that $\sfD_{b_S}=\{\zeta\}$ and $\sfC_{b_S}=[0,\zeta]$ with $\zeta=(1/\overline{\sigma_S})\,\xi\neq 0$.
Therefore, condition~{\it Erg} does not imply in general condition $\#\sfC_b\!=\!1$, or, equivalently, condition {\em Asy-e.}.
\par\medskip\noindent
{\it Conditions $\#\sfC_b\!=\!1$ and $\#\sfD_b\!=\!1$.} Since $\sfD_b$ is assumed to be non empty, condition~$\#\sfC_b\!=\!1$ clearly implies condition~$\#\sfD_b\!=\!1$.
\par
In contrast, as above mentioned the Stepanoff flow induces that $\sfD_{b_S}=\{\zeta\}$ and $\sfC_{b_S}=[0,\zeta]$ with $\zeta\in\R^d\setminus\{0\}$.
Alternatively, Example~\ref{exa.rhooDu} below provides a different class of two-dimensional vanishing vector fields $b$ such that $\sfD_b$ is a singleton, while $\sfC_b$ is a closed line set not reduced to a singleton. Therefore, condition~$\#\sfD_b\!=\!1$ does not imply in general $\#\sfC_b\!=\!1$.
%By a transitivity argument this can be also deduced from the above facts that condition~{\it Erg} implies condition $\#\sfD_b\!=\!1$, but condition~{\it Erg} does not imply in general condition $\#\sfC_b\!=\!1$. 
\par\medskip\noindent
{\it Condition $\#\sfD_b\!=\!1$.} Since condition~$\#\sfC_b\!=\!1$ implies condition~$\#\sfD_b\!=\!1$, but $\#\sfC_b\!=\!1$ does not imply in general condition~{\it Erg}, by a transitivity argument condition~$\#\sfD_b\!=\!1$ does not imply in general condition~{\it Erg}. Moreover, since condition~$\#\sfC_b\!=\!1$ is equivalent to condition~{\em Asy-e.}, but condition~$\#\sfD_b\!=\!1$ does not imply in general $\#\sfC_b\!=\!1$, condition~$\#\sfD_b\!=\!1$ does not imply in general condition~{\em Asy-e.}.
\par\medskip\noindent
{\it Condition Hom.} Here, we assume that the vector field $b$ is divergence free in $\R^d$.
\par
On the one hand, consider the constant vector field $b=e_1$ in $\R^d$, which induces the flow
\[
X(t,x)=x+t\,e_1\quad\mbox{for }(t,x)\in\R\times\R^d.
\]
Then, any function $f\in L^1_\sharp(Y_d)$ independent of variable $x_1$ is invariant for the flow $X$ with respect to any invariant probability measure which is absolutely continuous with respect to Lebesgue's measure. Hence, the flow $X$ is not ergodic with respect to such an invariant probability measure. Moreover, we have immediately $\sfC_b=\sfD_b=\{e_1\}$. Therefore, condition {\em Hom} which is equivalent to condition $\#\sfD_b=1$ (by Theorem~\ref{thm.SNChom}), does not imply in general condition {\it Erg}.
\par
On the other hand, the two-dimensional divergence free Oxtoby example \cite[Section~2]{Oxt} combined with the uniqueness result of \cite[Theorem~1]{Oxt} (see Example~\ref{exa.OMS}) provides a flow which is ergodic with respect to Lebesgue's measure, and such that $\sfC_b$ is not a unit set (see Proposition~\ref{pro.OMS}). Therefore, since condition~{\it Erg} implies condition {\em Hom} (see, {\em e.g.}, \cite[Theorem~3.2]{HoXi}) condition {\em Hom} does not imply in general condition $\#\sfC_b=1$.
Finally, condition~{\em Hom} does not imply in general either condition~{\it Erg}, or condition~$\#\sfC_b\!=\!1$, or, equivalently, condition~{\em Asy-e.}.
\par\medskip\noindent
{\it The rest of the implications can be easily deduced from the former arguments.}
\end{proof}
\begin{aexam}\label{exa.OMS}
Oxtoby~\cite{Oxt} provided an example of a free divergence analytic two-dimensional vector field $b$ with $(0,0)$ as unique stationary point in $Y_2$, such that the associated flow $X$ is ergodic with respect to Lebesgue's measure, and such that Lebesgue's measure is the unique invariant measure for the flow $X$ among all the invariant probability measures which do not load the point $(0,0)$.
Oxtoby's example is actually based on a Stepanoff flow \eqref{Sflow}, where $\rho_S$ is a non negative function in $C^1_\sharp(Y_2)$ with $(0,0)$ as unique stationary point, and where $\xi$ is a constant vector of $\R^2$ with incommensurable coordinates. Stepanoff~\cite[Section~4]{Ste} proved that Birkhoff's theorem applies if $\sigma_S:=1/\rho_S$ is in $L^1_\sharp(Y_2)$, which is not incompatible with the analyticity for~$\rho_S$. A suitable candidate for $\rho_S$ is then the function (see \cite[Example~4.2]{BrHe} for another application)
\beq\label{rho0}
\rho_S(x):=\left(\sin^2(\pi x_1)+\sin^2(\pi x_2)\right)^{\beta_0}\quad\mbox{for }x\in Y_2,\quad\mbox{with }\beta_0\in(1/2,1).
\eeq
More generally, Oxtoby~\cite[Theorem~1]{Oxt} proved that any two-dimensional flow homeomorphic to a Stepanoff flow with a unique stationary point $x_0$, admits a unique invariant probability measure $\mu$ for the flow $S$ \eqref{Sflow} satisfying $\mu(\{x_0\})=0$, and that $S$ is ergodic with respect to~$\mu$. Twenty five years later, Marchetto~\cite[Proposition~1.2]{Mar} extended this result to any flow homeomorphic to a Stepanoff flow with a finite number of stationary points in $Y_2$.
\par
In what follows, we extend the two-dimensional results of \cite{Oxt,Mar} to any dimension $d\geq 2$, using a non ergodic and elementary approach based on some classical tools of PDE's analysis (mollification, truncation) combined with the characterization of invariant functions of Lemma~\ref{lem.invdenmea}.
\end{aexam}
\begin{apro}\label{pro.OMS}
Consider a $d$-dimensional, $d\geq 2$, Stepanoff flow $S$ \eqref{Sflow} where $\rho_S\in C^1_\sharp(Y_d)$ is non negative with a finite positive number of roots (the stationary points for $S$) in $Y_d$ and $\sigma_S:=1/\rho_S\in L^1_\sharp(Y_d)$, and where $\xi\in\R^d$ has incommensurable coordinates. Then, the measure $\sigma_S(x)/\overline{\sigma_S}\;dx$ is the unique invariant probability measure on $Y_d$ for the flow $S$, which does not load the zero set of $\rho_S$. The flow $S$ is also ergodic with respect to the measure $\sigma_S(x)/\overline{\sigma_S}\;dx$. Moreover, we have $\sfD_{b_S}=\{\zeta\}$ and $\sfC_{b_S}=[0,\zeta]$, where $\zeta:=1/\overline{\sigma_S}\;\xi$.
\end{apro}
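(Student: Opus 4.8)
The plan is to establish four facts in sequence: first, that $\sigma_S(x)/\overline{\sigma_S}\,dx$ is invariant for $S$; second, that it is the \emph{unique} invariant probability measure not charging the zero set $Z$ of $\rho_S$; third, ergodicity; fourth, the computation of $\sfD_{b_S}$ and $\sfC_{b_S}$. For invariance, note $\mathrm{div}(\sigma_S\,b_S)=\mathrm{div}(\sigma_S\,\rho_S\,\xi)=\mathrm{div}(\xi)=0$ pointwise away from $Z$, and since $Z$ is finite (hence of zero capacity in dimension $d\ge2$, being a finite set of points) one upgrades this to $\mathrm{div}(\sigma_S\,b_S)=0$ in $\cD'(\R^d)$; then Proposition~\ref{pro.divcurl} (Liouville) gives invariance of the measure. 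Here one must check $\sigma_S b_S = \xi \in L^1_{\mathrm{loc}}$ so the distributional divergence makes sense, which is clear since $\xi$ is constant.

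For uniqueness, suppose $\mu = \tau(x)\,dx + \mu_{\mathrm{sing}}$ — actually the cleaner route, given the tools the authors advertise (mollification, truncation, Lemma~\ref{lem.invdenmea}), is: let $\mu$ be any invariant probability measure with $\mu(Z)=0$. On the open set $Y_d\setminus Z$ the field $b_S$ never vanishes and $S$ restricted there is, after the time-change $dt = \sigma_S\,ds$, conjugate to the straight-line flow $x\mapsto x+s\xi$; since $\xi$ has incommensurable coordinates this linear flow is uniquely ergodic on $Y_d$ with Lebesgue measure. Pulling back through the time-change, any $S$-invariant measure on $Y_d\setminus Z$ must be a constant multiple of $\sigma_S(x)\,dx$; normalizing to total mass $1$ forces $\mu = \sigma_S/\overline{\sigma_S}\,dx$. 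Ergodicity with respect to this measure then follows because unique ergodicity of the straight-line flow transfers through the conjugacy, or alternatively: an $S$-invariant $L^1$ function $f$ w.r.t.\ $\sigma_S\,dx$ pulls back to a function invariant under $x\mapsto x+s\xi$ for all $s$, which by the incommensurability must be a.e.\ constant. The authors' "elementary PDE" phrasing suggests they instead argue directly with the equation $b_S\cdot\nabla f = 0$, i.e.\ $\xi\cdot\nabla f=0$ away from $Z$, mollify $f$, use that $Z$ is negligible, and deduce $f$ is constant along every line of direction $\xi$, hence (density of the line mod $\Z^d$) a.e.\ constant.

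For the rotation sets: by definition $\sfD_{b_S}$ consists of $\overline{\sigma\,b_S}$ over invariant densities $\sigma\in L^1_\sharp$; by the uniqueness just proved, the only such density is $\sigma_S/\overline{\sigma_S}$ (any density is absolutely continuous, hence cannot charge $Z$ precisely because $\{0\}$-capacity arguments make an a.c.\ measure automatically give $Z$ zero mass — a finite set has Lebesgue measure zero), giving $\sfD_{b_S}=\{\,\overline{\sigma_S\,\rho_S\,\xi}/\overline{\sigma_S}\,\} = \{\xi/\overline{\sigma_S}\} = \{\zeta\}$. For $\sfC_{b_S}$, every $\mu\in\cI_{b_S}$ gives $\overline{b_S}^\mu = \big(\int_{Y_d}\rho_S\,d\mu\big)\,\xi$, so $\sfC_{b_S}\subseteq \R_{\ge0}\,\xi$; the Dirac mass at a stationary point is invariant and yields $0$, while $\sigma_S/\overline{\sigma_S}\,dx$ yields $\zeta$, and $\sfC_{b_S}$ is convex, so $[0,\zeta]\subseteq \sfC_{b_S}$. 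Conversely $\int_{Y_d}\rho_S\,d\mu \le \max\rho_S$ need not bound it by $1/\overline{\sigma_S}$ directly, so one needs: for any invariant $\mu$, $\int \rho_S\,d\mu \le 1/\overline{\sigma_S}$. This is the one genuinely delicate point — I would prove it by decomposing $\mu = \mu(Z)\,\delta\text{-part} + (1-\mu(Z))\,\nu$ with $\nu$ supported off $Z$, applying the uniqueness result to $\nu$ to get $\nu = \sigma_S/\overline{\sigma_S}\,dx$, whence $\int\rho_S\,d\mu = (1-\mu(Z))\int\rho_S\cdot\sigma_S/\overline{\sigma_S}\,dx = (1-\mu(Z))/\overline{\sigma_S} \in [0,1/\overline{\sigma_S}]$, closing the computation $\sfC_{b_S}=[0,\zeta]$.

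The main obstacle I anticipate is the promotion of the pointwise identity $\mathrm{div}(\sigma_S\,b_S)=0$ (and the analogous statement for invariant functions) from $Y_d\setminus Z$ to all of $Y_d$ in the distributional sense, and correspondingly the rigorous handling of the conjugacy/time-change near the stationary points — one must verify that no invariant mass or nontrivial invariant function can "hide" at the finitely many roots of $\rho_S$. In dimension $d\ge2$ a finite set is removable for the relevant Sobolev/capacity arguments, which is exactly why the statement needs $d\ge2$; I would isolate this as a lemma (finite sets are negligible for $W^{1,1}$-type restrictions) and then the rest is the routine mollification-and-truncation bookkeeping the authors promise.
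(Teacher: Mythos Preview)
Your proposal is essentially correct, but it follows the classical ergodic route (time-change conjugacy to the linear flow $x\mapsto x+s\xi$ and its unique ergodicity) that the paper explicitly sets out to \emph{avoid}. The paper's engine is a purely PDE lemma (their Lemma~\ref{lem.divtnuxi=0}): if $\nu\in\cM_\sharp(Y_d)$ and $\mathrm{div}(\tilde{\nu}\,\xi)=0$ in $\cD'(\R^d)$ with $\xi$ incommensurable, then $\nu$ is a constant multiple of Lebesgue. For uniqueness they apply this to $\nu=\rho_S\,\mu$ (invariance gives $\mathrm{div}(\rho_S\tilde{\mu}\,\xi)=0$ by Liouville), obtaining $\rho_S\,d\mu=c\,dx$, and then divide by $\rho_S$ via the approximation $\varphi/(\rho_S+1/n)$, using the hypothesis $\mu(Z)=0$ and $\sigma_S\in L^1$ to pass to the limit by dominated convergence. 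For ergodicity they truncate an invariant $f$ to $T_n^\pm(f)\in L^\infty$, use Lemma~\ref{lem.invdenmea} to turn these into invariant measures $T_n^\pm(f)\,\sigma_S\,dx$, note that $T_n^\pm(f)\,\sigma_S\,b_S=T_n^\pm(f)\,\xi$ is then divergence free, and apply Lemma~\ref{lem.divtnuxi=0} again to get $T_n^\pm(f)$ constant. For $\sfC_{b_S}=[0,\zeta]$ they invoke a perturbation result (their \cite[Theorem~3.1]{BrHe}) on $b_n=b_S+1/n$, though your ergodic-decomposition argument (restrict $\mu$ off $Z$, apply uniqueness, recombine) is exactly what the paper records as an alternative in Remark~\ref{rem.ergS}. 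What each approach buys: yours is conceptually transparent and leverages the known unique ergodicity of Kronecker flows, but requires checking completeness of the time-change (which does use $\sigma_S\in L^1$, via Birkhoff for the linear flow) and is dimension-agnostic only because that classical fact is; the paper's approach is self-contained, needs no dynamical conjugacy, and makes the role of incommensurability explicit through a density statement for the lattice $\Lambda\,\Z^d\subset\R^{d-1}$ (their Lemma~\ref{lem.densubgRd}).

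One correction to your diagnosis of the ``main obstacle'': no capacity or removability argument across $Z$ is needed, and the restriction $d\ge 2$ is \emph{not} there for that reason. The invariance of $\sigma_S/\overline{\sigma_S}\,dx$ is immediate because $\sigma_S\,b_S=\xi$ holds as an identity of $L^1_{\mathrm{loc}}$ functions (equal a.e.), so $\mathrm{div}(\sigma_S\,b_S)=0$ globally in $\cD'(\R^d)$ with nothing to remove. In the paper's scheme the set $Z$ enters only through the hypothesis $\mu(Z)=0$, used solely to justify the dominated-convergence step when dividing by $\rho_S$; and $d\ge 2$ enters only so that the projected lattice $\Lambda\,\Z^d$ is dense in $\R^{d-1}$, which is what forces the mollified measure in Lemma~\ref{lem.divtnuxi=0} to be constant.
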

\begin{arem}\label{rem.homeoS}
Similarly to \cite{Oxt,Mar} the result of Proposition~\ref{pro.OMS} actually extends to any flow which is homeomorphic to a Stepanoff flow.
\par\noindent
Indeed, let $\Psi$ be a $C^2$-diffeomorphism on $Y_d$ (see \cite[Remark~2.1]{BrHe}).
Define the flow $\hat{X}$ obtained through the homeomorphism $\Psi$ from the flow $X$ associated with a vector field $b\in C^1_\sharp(Y_d)^d$, by
\beq\label{hX}
\hat{X}(t,x):=\Psi\big(X(t,\Psi^{-1}(x))\big)\quad\mbox{for }(t,x)\in \R\times Y_d.
\eeq
According to \cite[Remark~2.1]{BrHe} the homeomorphic flow $\hat{X}$ is the flow associated with the vector field $\hat{b}\in C^1_\sharp(Y_d)^d$ defined by
\beq\label{hb}
\hat{b}(x)=\nabla\Psi(\Psi^{-1}(x))\,b(\Psi^{-1}(x))\quad\mbox{for }x\in Y_d.
\eeq
Now, let $\mu$ be a probability mesure on $Y_d$, and let $\hat{\mu}$ be the image measure of $\mu$ by $\Psi$ defined by
\[
\int_{Y_d}\varphi(x)\,d\hat{\mu}(x)=\int_{Y_d}\varphi(\Psi(y))\,d{\mu}(y)\quad\mbox{for }\varphi\in C^0_\sharp(Y_d).
\]
By \eqref{hX} we have
\beq\label{hXX}
\left\{\ba{l}
\dis \forall\,\varphi\in C^0_\sharp(Y_d),\quad \int_{Y_d}\varphi\big(\hat{X}(t,x)\big)\,d\hat{\mu}(x)=\int_{Y_d}\varphi\big(\Psi(X(t,y))\big)\,d{\mu}(y),
\\ \ecart
\dis \forall\,\rho\in C^0_\sharp(Y_d),\quad \hat{\mu}(\{\rho=0\})=\mu(\{\rho\circ\Psi=0\}),
\\ \ecart
\dis \forall\,f\in L^1_\sharp(Y_d),\ \hat{f}:=f\circ\Psi^{-1},\ \forall\,t\in\R,\quad \hat{f}\big(\hat{X}(t,x)\big)=f\big(X(t,\Psi^{-1}(x))\big)\;\;\mbox{a.e. }x\in Y_d.
\ea\right.
\eeq
Also note that, if $\mu$ is invariant for X, so is $\hat{\mu}$ for $\hat{X}$. 
Therefore, if the homeomorphic flow $\hat{X}$ is a Stepanoff flow $S$ satisfying the assumptions of Proposition~\ref{pro.OMS}, we easily deduce from~\eqref{hXX} that Proposition~\ref{pro.OMS} holds true for the flow $X$. Namely, there exists a unique invariant probability measure $\mu$ on $Y_d$ for the flow $X$, which does not load the zero set of $\rho_S\circ\Psi$. Moreover, the measure $\mu$ is absolutely continuous with respect to Lebesgue's measure with an a.e. positive density, and the flow $X$ is ergodic with respect to $\mu$.
\end{arem}
\begin{arem}\label{rem.ergS}
Assuming the uniqueness result of Proposition~\ref{pro.OMS}, the ergodicity of $\sigma_S(x)/\overline{\sigma_S}\;dx$ and equality $\sfC_{b_S} = [0,\zeta]$ can also be proved using standard arguments of ergodic theory. Indeed, let $\cE_{b_S}$ be the set of all the ergodic invariant probability measures for the flow $S$. Recall (see, {\em e.g.}, \cite[Theorem~2, Chapter~1]{CFS}) that $\cI_{b_S}={\rm conv}\,(\cE_{b_S})$, and that two elements in $\cE_{b_S}$ are either equal, or mutually singular. Now, if $\mu\in\cE_{b_S}$ satisfies $\mu(\{x_0\}) > 0$ for some zero of~$\rho_S$, then $\mu = \delta_{x_0}$ due to $\delta_{x_0} \in \cE_{b_S}$. Next, since $\sigma_S(x)/\overline{\sigma_S}\; dx$ is the unique invariant probability measure on $Y_d$ for the flow $S$, which does not load the zero set of $\rho_S$, it follows from equality $\cI_{b_S} ={\rm conv}\,(\cE_{b_S})$ that the flow $S$ is ergodic with respect to $\sigma_S(x)/\overline{\sigma_S}\; dx$.
Hence, $\cE_{b_S}$ is the finite set
\beq\label{EbS}
\cE_{b_S}=\big\{\sigma_S(x)/\overline{\sigma_S}\;dx\big\}\cup\big\{\delta_x:\rho_S(x)=0\big\}.
\eeq
Therefore, $\overline{\sigma_S\,b_S}/\overline{\sigma_S}=\zeta$ provides the unique non zero contribution in $\sfC_{b_S}$ through $\cE_{b_S}$, which by convex combination implies that $\sfC_{b_S}= [0,\zeta]$.
Equality $\cI_{b_S}={\rm conv}\,(\cE_{b_S})$ and property~\eqref{EbS} also give $\sfD_{b_S}=\{\zeta\}$.
\end{arem}
\begin{proof}{ of Proposition~\ref{pro.OMS}}
Assume that $\mu$ is an invariant probability measure for the flow $S$~\eqref{Sflow}, which does not load the zero set of $\rho_S$.
Then, by virtue of Proposition~\ref{pro.divcurl} the Borel measure~$\tilde{\mu}$ on $\R^d$ defined by \eqref{tmu} is solution to the equation
\[
{\rm div}(\tilde{\mu}\,b_S)={\rm div}(\rho_S\,\tilde{\mu}\,\xi)=0\quad\mbox{in }\cD'(\R^d).
\]
Hence, applying Lemma~\ref{lem.divtnuxi=0} below with the measure $\nu=\rho_S\,\mu$ which is connected to the measure $\tilde{\nu}=\rho_S\,\tilde{\mu}$ by~\eqref{tmu}, there exists a constant $c\in\R$ such that $\rho_S(x)\,d\mu(x)=c\,dx$ on $Y_d$, {\em i.e.}
\[
\forall\,\varphi\in C^0_\sharp(Y_d),\quad \int_{Y_d}\varphi(x)\,\rho_S(x)\,d\mu(x)=\int_{Y_d}c\,\varphi(x)\,dx.
\]
Then, we get that for any $n\geq 1$,
\beq\label{rhomun}
\forall\,\varphi\in C^0_\sharp(\R^d),\quad\int_{Y_d}{\varphi(x)\over\rho_S(x)+1/n}\,\rho_S(x)\,d\mu(x)=\int_{Y_d}c\,{\varphi(x)\over\rho_S(x)+1/n}\,dx.
\eeq
However, since measure $\mu$ does not load the finite zero set of $\rho_S$ in $Y_d$ (at this point this assumption is crucial), we have
\[
\left\{\ba{llll}
\dis {\varphi\,\rho_S\over\rho_S+1/n}\;\mathop{\longrightarrow}_{n\to\infty}\; \varphi & d\mu(x)\mbox{-a.e. in }Y_d,
& \mbox{with} & \dis \left|\,{\varphi\,\rho_S\over\rho_S+1/n}\,\right|\leq\|\varphi\|_\infty\in L^1_\sharp(Y_d,\mu)
\\ \ecart
 \dis {\varphi\over\rho_S+1/n}\;\mathop{\longrightarrow}_{n\to\infty}\; {\varphi\over\rho_S} & dx\mbox{-a.e. in }Y_d,
& \mbox{with} & \dis \left|\,{\varphi\over\rho_S+1/n}\,\right|\leq{\|\varphi\|_\infty\over \rho_S}\in L^1_\sharp(Y_d).
\ea\right.
\]
Therefore, passing to the limit as $n\to\infty$ owing to Lebesgue's theorem in \eqref{rhomun}, we get that
\[
\forall\,\varphi\in C^0_\sharp(Y_d),\quad \int_{Y_d}\varphi(x)\,d\mu(x)=\int_{Y_d}c\,\varphi(x)\,\sigma_S(x)\,dx.
\]
We thus obtain the equality $d\mu(x)=\sigma_S(x)/\overline{\sigma_S}\;dx$, which shows the uniqueness of an invariant probability measure for the flow $S$, which does not load the zero set of $\rho_S$. Conversely, $\sigma_ S\,b_S = \xi$ is clearly divergence free, which by Proposition~\ref{pro.divcurl} implies that $\mu$ is an invariant probability measure for the flow $S$. We have just proved that $\mu$ is the unique invariant probability measure for the flow $S$, which does not load the zero set of $\rho_S$.
\par
Now, let us prove that the flow $S$ is ergodic with respect to the measure $d\mu(x)=\sigma_S(x)/\overline{\sigma_S}\;dx$. To this end, let $f\in L^1_\sharp(Y_d)$ be an invariant function for the flow $S$ with respect to measure $\mu$.
Let $\sfT_n^\pm$ be the truncation functions at level $n\in\N$, defined by
\[
\sfT_n^\pm(t):=\big((\pm\,t)\vee 0)\big)\wedge n\quad\mbox{for }t\in\R.
\]
Then, the functions $T_n^\pm(f)\in L^\infty_\sharp(Y_d)$ are also invariant functions for the flow $S$ with respect to measure $\mu$.
We know that $\sigma_S(x)\,dx$ is an invariant measure for the flow $S$.
Hence, by virtue of Lemma~\ref{lem.invdenmea} the Radon measures $d\mu_n^\pm(x):=(T_n^\pm(f)\,\sigma_S)(x)\,dx$  are invariant for $S$, which by relation~\eqref{tmu} and Proposition~\ref{pro.divcurl} implies that
\[
\widetilde{\mu_n^\pm}\,b_S=\mu_n^\pm\,b_S=(T_n^\pm(f)\,\sigma_S\,b_S)(x)\,dx=T_n^\pm(f)(x)\,\xi\,dx
\]
are divergence free in~$\R^d$.
Therefore, applying Lemma~\ref{lem.divtnuxi=0} with measures $d\nu(y)=T_n^\pm(f)(y)\,dy$ which satisfy $\tilde{\nu}=\nu$, the functions $T_n^\pm(f)$ agree with constants $c_n^\pm\in\R$ a.e. in~$Y_d$.
However, since the sequences $T_n^\pm(f)$ converge strongly in $L^1_\sharp(Y_d)$ to the non negative and the non positive parts $f^\pm$ of $f$, the sequences~$c_n^\pm$ converge to some constants $c_\pm$ in $\R$. Hence, the function $f=f^+-f^-$ agrees with the constant $c_+-c_-$ a.e. in $Y_d$.
This proves the desired property.
\par
Next, since $\sigma_S(x)/\overline{\sigma_S}\;dx$ is the unique invariant probability measure on $Y_d$ for the flow~$S$, among the invariant probability measures which are absolutely continuous with respect to Lebesgue's measure, we have
\[
\sfD_{b_S}=\left\{\int_{Y_d}\rho_S(x)\,\xi\,\sigma_S(x)/\overline{\sigma_S}\;dx\right\}=\{1/\overline{\sigma_S}\;\xi\}.
\]
Note that the former equality can be alternatively deduced from the ergodicity of the flow $S$ combined with Theorem~\ref{thm.7con}.
\par\noindent
On the other hand, set $b_n:=b_S+1/n$ for $n\geq 1$. Since $\xi$ has incommensurable coordinates, we have (see \cite[Example~4.1]{BrHe})
\[
\sfC_{b_n}=\{\zeta_n\}\quad\mbox{where}\quad\zeta_n:=\left(\int_{Y_d}{dx\over \rho_S(x)+1/n}\,dx\right)^{-1}\zeta.
\]
Finally, since the function $\rho_S$ vanishes in $Y_d$, by virtue of \cite[Theorem~3.1]{BrHe} we obtain that
\[
\sfC_{b_S}=[0,\zeta],\quad\mbox{where}\quad \zeta=\lim_{n\to\infty}\zeta_n=1/\overline{\sigma_S}\;\xi.
\]
Note that the ergodic approach of Remark~\ref{rem.ergS} alternatively shows that $\sfC_{b_S}=[0,\zeta]$.
The proof of Proposition~\ref{pro.OMS} is now complete.
\end{proof}
\begin{alem}\label{lem.divtnuxi=0}
Let $\nu\in\cM_\sharp(Y_d)$, let $\tilde{\nu}\in\cM_{\rm loc}(\R^d)$ be the Borel measure on $\R^d$ connected to the measure $\nu$ by relation~\eqref{tmu}, and let $\xi\in\R^d$ be a vector with incommensurable coordinates. Assume that $\tilde{\nu}\,\xi$ is divergence free in~$\R^d$, {\em i.e.}
\beq\label{divtnuxi=0}
\forall\,\varphi\in C^\infty_c(\R^d),\quad \int_{\R^d}\xi\cdot\nabla\varphi(x)\,d\tilde{\nu}(x)=0.
\eeq
Then, there exists a constant $c\in\R$ such that $d\nu(y)=c\,dy$ on $Y_d$.
\end{alem}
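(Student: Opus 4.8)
The plan is to reduce the statement to a short computation with Fourier coefficients: the hypothesis will force every non-zero Fourier coefficient of $\nu$ to vanish, whence $\nu$ is a constant multiple of Lebesgue's measure on $Y_d$.

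First I would reformulate the assumption as a torus identity. The constant field $\xi$ is trivially an element of $C^1_\sharp(Y_d)^d$, so Liouville's theorem (Proposition~\ref{pro.divcurl}), applied with $b=\xi$ and $\mu=\nu$, shows that hypothesis~\eqref{divtnuxi=0}, namely $\div(\tilde\nu\,\xi)=0$ in $\cD'(\R^d)$, is equivalent to
\[
\forall\,\psi\in C^1_\sharp(Y_d),\qquad \int_{Y_d}\xi\cdot\nabla\psi(y)\,d\nu(y)=0 .
\]
The next step is to feed the trigonometric functions $\psi(y)=\cos(2\pi\,k\cdot y)$ and $\psi(y)=\sin(2\pi\,k\cdot y)$, $k\in\Z^d$, into this identity: since $\xi\cdot\nabla\psi$ then equals $-2\pi\,(k\cdot\xi)\,\sin(2\pi\,k\cdot y)$ and $2\pi\,(k\cdot\xi)\,\cos(2\pi\,k\cdot y)$ respectively, one obtains $(k\cdot\xi)\,\widehat\nu(k)=0$, where $\widehat\nu(k):=\int_{Y_d}e^{2\pi i\,k\cdot y}\,d\nu(y)$. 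Because the coordinates of $\xi$ are incommensurable, $k\cdot\xi=0$ with $k\in\Z^d$ forces $k=0$; hence $\widehat\nu(k)=0$ for every $k\in\Z^d\setminus\{0\}$.

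It then remains to reconstruct $\nu$ from its Fourier coefficients. Setting $c:=\widehat\nu(0)=\nu(Y_d)\ge 0$ (a genuine real number since $Y_d$ is compact), I would take an arbitrary $\psi\in C^\infty_\sharp(Y_d)$, expand it in its uniformly convergent Fourier series $\psi=\sum_{k\in\Z^d}c_k\,e^{2\pi i\,k\cdot y}$, integrate term by term against the finite measure $\nu$ — legitimate since $(c_k)$ decays faster than any power of $|k|$ while $|\widehat\nu(k)|\le\nu(Y_d)$ — and conclude
\[
\int_{Y_d}\psi\,d\nu=\sum_{k\in\Z^d}c_k\,\widehat\nu(k)=c_0\,\widehat\nu(0)=c\int_{Y_d}\psi(y)\,dy ,
\]
using $c_0=\int_{Y_d}\psi\,dy$. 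Both sides being continuous for the uniform norm and $C^\infty_\sharp(Y_d)$ being dense in $C^0_\sharp(Y_d)$, this equality extends to all $\psi\in C^0_\sharp(Y_d)$, and the Riesz representation theorem yields $d\nu(y)=c\,dy$ on $Y_d$.

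I do not expect a serious obstacle here: the only points requiring a little care are the passage to complex/trigonometric test functions — handled by splitting into real and imaginary parts, which are bona fide real elements of $C^\infty_\sharp(Y_d)$ — and the interchange of sum and integral in the last step, justified by the uniform convergence of the Fourier series of a smooth function together with the finiteness of $\nu$. As an alternative to the reconstruction step, one may note that the reformulation above identifies $\nu$ with an invariant measure of the Kronecker flow $x\mapsto x+t\,\xi$ on $Y_d$, which is uniquely ergodic exactly when the coordinates of $\xi$ are incommensurable, so that $\nu/\nu(Y_d)$ — if $\nu\ne 0$ — must be Lebesgue's measure; I prefer the Fourier route since it stays elementary and self-contained, in keeping with the paper's approach.
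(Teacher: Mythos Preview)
Your argument is correct and takes a genuinely different route from the paper. The paper proceeds by mollification: it shows that each regularisation $\phi_n*\tilde\nu$ is a smooth $\Z^d$-periodic function whose directional derivative along $\xi$ vanishes, hence depends only on variables transverse to~$\xi$; it then invokes a separate lattice lemma (Lemma~\ref{lem.densubgRd}, asserting the density of $\Lambda\,\Z^d$ in $\R^{d-1}$, where $\Lambda$ projects orthogonally to~$\xi$) to conclude that each $\phi_n*\tilde\nu$ is constant, and finally passes to the limit $n\to\infty$. Your Fourier approach is considerably shorter and dispenses with both the mollification step and the auxiliary density lemma: the incommensurability condition $k\cdot\xi\neq 0$ for $k\in\Z^d\setminus\{0\}$ kills the non-zero Fourier modes of $\nu$ directly, doing in one line the work that the paper's geometric lemma accomplishes. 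Conversely, the paper's route stays entirely within real-variable PDE techniques (mollifiers, change of coordinates), in line with the authors' declared aim of an ``elementary approach based on classical tools of PDE's analysis'', and makes the geometric content of the incommensurability hypothesis explicit through the density of the projected lattice rather than through a Diophantine non-resonance condition. Your closing remark on the unique ergodicity of the Kronecker flow is apt; it is essentially the same observation, phrased dynamically.
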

\begin{arem}
In Lemma~\ref{lem.divtnuxi=0} the incommensurability of $\xi's$ coordinates is also a necessary condition to get \eqref{divtnuxi=0}.
Indeed, assume that there exists a non nul integer vector $k\in\Z^d\setminus\{0\}$ such that $k\cdot\xi=0$. Then, for any non constant $\Z$-periodic function $\theta\in C^1_\sharp(Y_1)$, the function $\big(\tau:x\mapsto\theta(k\cdot x)\big)$ belongs to $C^1_\sharp(Y_d)$,  $\tilde{\tau}(x)\,dx=\tau(x)\,dx$, and
\[
\forall\,x\in\R^d,\quad {\rm div}(\tau\,\xi)(x)=\theta'(k\cdot x)\,k\cdot\xi=0,
\]
so that the conclusion of Lemma~\ref{lem.divtnuxi=0} does not hold true.
\end{arem}
\begin{proof}{ of Lemma~\ref{lem.divtnuxi=0}} Let $(\phi_n)_{n\in\N}$ be a sequence of mollifiers in~$C^\infty_c(\R^d)$ with $\overline{\phi_n}~=1$.
Applying successively Fubini's theorem twice and \eqref{divtnuxi=0}, the convolution $\phi_n*\tilde{\nu}\in C^\infty(\R^d)$ satisfies for any $n\in\N$ and for any $\varphi\in C^\infty_c(\R^d)$,
\[
\ba{l}
\dis \int_{\R^d}\left(\int_{\R^d}\phi_n(x-y)\,d\tilde{\nu}(y)\right)\xi\cdot\nabla\varphi(x)\,dx=\int_{\R^d}\left(\int_{\R^d}\phi_n(x-y)\,\xi\cdot\nabla\varphi(x)\,dx\right)d\tilde{\nu}(y)
\\ \ecart
\dis =\int_{\R^d}\left(\int_{\R^d}\phi_n(x)\,\xi\cdot\nabla_x\varphi(x+y)\,dx\right)d\tilde{\nu}(y)=\int_{\R^d}\left(\int_{\R^d}\xi\cdot\nabla_y\varphi(x+y)\,d\tilde{\nu}(y)\right)\phi_n(x)\,dx
=0,
\ea
\]
or, equivalently,
\beq\label{divphintnu=0}
{\rm div}\big((\phi_n*\tilde{\nu})\,\xi\big)=\nabla(\phi_n*\tilde{\nu})\cdot\xi=0\quad\mbox{in }\R^d.
\eeq
Now, consider $\xi^1,\dots,\xi^{d-1}$ $(d\!-\!1)$ vectors in $\R^d$ such that $(\xi^1,\dots,\xi^{d-1},\xi)$ is an orthogonal basis of $\R^d$, and let $\Lambda$ be the matrix in $\R^{(d-1)\times d}$ whose lines are the vectors $\xi^1,\dots,\xi^{d-1}$, {\em i.e.} its entries are given by $\Lambda_{ij}=\xi^i_j$ for $(i,j)\in\{1,\dots,d\!-\!1\}\times\{1,\dots,d\}$.
Then, make the linear change of variables
\[
\ba{rll}
\R^d & \to & \R^d
\\ \ecart
x & \mapsto & y=(\Lambda x,\xi\cdot x)=(\xi^1\cdot x,\dots,\xi^{d-1}\cdot x,\xi\cdot x).
\ea
\]
Since \eqref{divphintnu=0} means that $(\phi_n*\tilde{\nu})(x)$ is independent of the variable $y_d=\xi\cdot x$, it follows that there exists a function $\theta_n\in C^\infty(\R^{d-1})$ such that
\[
\forall\,x\in\R^d,\quad (\phi_n*\tilde{\nu})(x)=\theta_n(\Lambda  x).
\]
Moreover, due to \eqref{tmu} and the $\Z^d$-periodicity of $(\phi_n)_\sharp$, we have for any $x\in\R^d$ and $k\in\Z^d$,
\[
\ba{l}
\dis (\phi_n*\tilde{\nu})(x+k)=\int_{\R^d}\phi_n(x+k-y)\,d\tilde{\nu}(y)=\int_{Y_d}(\phi_n)_\sharp(x+k-y)\,d\nu(y)
\\ \ecart
\dis =\int_{Y_d}(\phi_n)_\sharp(x-y)\,d\nu(y)=\int_{\R^d}\phi_n(x-y)\,d\tilde{\nu}(y)=(\phi_n*\tilde{\nu})(x),
\ea
\]
which implies that the function $\phi_n*\tilde{\nu}$ is also $\Z^d$-periodic.
As a consequence, the regular function $\theta_n$ satisfies the periodicity condition
\[
\forall\,k\in\Z^d,\ \forall\,x\in\R^{d-1},\quad \theta_n(x+\Lambda k)=\theta_n(x).
\]
Hence, by virtue of the density Lemma~\ref{lem.densubgRd} below we get that $\theta_n$ is a constant $c_n\in\R$, and thus $\phi_n*\tilde{\nu}=c_n$ in~$\R^d$.
Therefore, by Fubini's theorem we have for any $\varphi\in C^\infty_c(\R^d)$,
\[
\dis \int_{\R^d}c_n\,\varphi(x)\,dx=\int_{\R^d}\left(\int_{\R^d}\phi_n(x-y)\,d\tilde{\nu}(y)\right)\varphi(x)\,dx
=\int_{\R^d}\left(\int_{\R^d}\phi_n(x-y)\,\varphi(x)\,dx\right)d\tilde{\nu}(y),
\]
where the function $\big(y\mapsto \int_{\R^d}\phi_n(x-y)\,\varphi(x)\,dx\big)$ converges uniformly to $\varphi$ on $\R^d$ as $n\to\infty$, whose support is included in a fixed compact set of~$\R^d$, and which is bounded uniformly by $\|\varphi\|_\infty$.
Therefore, passing to the limit as $n\to\infty$ thanks to Lebesgue's theorem with respect to measure~$\tilde{\nu}$, we get that the sequence $(c_n)_{n\in\N}$ converges to some $c\in\R$, and that
\[
\forall\,\varphi\in C^\infty_c(\R^d),\quad \int_{\R^d}c\,\varphi(x)\,dx=\int_{\R^d}\varphi(y)\,d\tilde{\nu}(y).
\]
Hence, we deduce the equality $d\tilde{\nu}(x)=c\,dx$ on $\R^d$, or, equivalently, $d\nu(y)=c\,dy$ on $Y_d$ by virtue of Remark~\ref{rem.tmu}.
This concludes the proof of Lemma~\ref{lem.divtnuxi=0}.
\end{proof}
\begin{alem}\label{lem.densubgRd}
Let $\xi$ be a vector in $\R^d$ for $d\geq 2$, with incommensurable coordinates, let $\xi^1,\dots,\xi^{d-1}$ be $(d\!-\!1)$  vectors in $\R^d$ such that
$(\xi^1,\dots,\xi^{d-1},\xi)$ is an orthogonal basis of $\R^d$, and let $\Lambda$ be the matrix in $\R^{(d-1)\times d}$ whose lines are the vectors $\xi^1,\dots,\xi^{d-1}$.
Then, the lattice $\Lambda\,\Z^d$ is dense in~$\R^{d-1}$.
\end{alem}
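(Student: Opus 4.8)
The plan is to deduce the density of $\Lambda\,\Z^d$ from the classical Kronecker-type criterion: a subgroup $G\subseteq\R^{d-1}$ is dense in $\R^{d-1}$ if and only if the only vector $t\in\R^{d-1}$ with $\langle g,t\rangle\in\Z$ for every $g\in G$ is $t=0$. The easy direction is immediate (a continuous map $\R^{d-1}\to\Z$ is constant, so if $G$ is dense then $\langle\cdot,t\rangle\equiv 0$, whence $t=0$). The direction actually needed — non-density produces such a $t$ — follows from the structure of closed subgroups of $\R^{d-1}$ (equivalently, from Pontryagin duality): if $\overline{G}$ were a proper closed subgroup, then $\R^{d-1}/\overline{G}$ would be a nontrivial locally compact abelian group, hence carry a nontrivial continuous character into $\R/\Z$; pulling it back gives a nonzero $t$ with $\langle g,t\rangle\in\Z$ for all $g\in\overline{G}\supseteq G$.

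Applying this with $G=\Lambda\,\Z^d$, I would argue by contradiction. Suppose $\Lambda\,\Z^d$ is not dense in $\R^{d-1}$, and pick $t\in\R^{d-1}\setminus\{0\}$ such that $\langle\Lambda k,t\rangle\in\Z$ for every $k\in\Z^d$. Using $\langle\Lambda k,t\rangle=\langle k,\Lambda^{\top}t\rangle$ and testing against $k=e_1,\dots,e_d$, this says precisely that the vector $m:=\Lambda^{\top}t=\sum_{i=1}^{d-1}t_i\,\xi^i$ belongs to $\Z^d$.

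The key observation is that $m$ is orthogonal to $\xi$: since $(\xi^1,\dots,\xi^{d-1},\xi)$ is an orthogonal basis of $\R^d$, each $\xi^i\cdot\xi=0$, hence $m\cdot\xi=0$. Thus $m\in\Z^d$ satisfies $m\cdot\xi=0$, and the incommensurability of the coordinates of $\xi$ (as recalled just before the lemma, no nonzero integer vector is orthogonal to $\xi$) forces $m=0$. Finally, $\xi^1,\dots,\xi^{d-1}$ are linearly independent, being part of a basis, so $\Lambda^{\top}$ is injective; hence $m=0$ yields $t=0$, contradicting $t\neq 0$. Therefore $\Lambda\,\Z^d$ is dense in $\R^{d-1}$.

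I do not expect a genuine obstacle here: every step after the density criterion is elementary linear algebra plus the definition of incommensurability. The one point requiring care is the invocation of the criterion itself — one should either cite the classification of closed subgroups of $\R^n$ as $V\oplus(\text{discrete subgroup})$ and check the two cases directly (a proper linear part $V$ gives a linear functional vanishing on $\overline{G}$; a full lattice part gives a nonzero element of the dual lattice), or quote Pontryagin duality outright. Either way the argument is short.
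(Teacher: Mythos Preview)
Your proof is correct, and it takes a genuinely different route from the paper's.

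The paper argues in coordinates: since $\ker\Lambda=\R\xi$, after reordering one has $\Lambda e_d=\sum_{i=1}^{d-1}\alpha_i\,\Lambda e_i$ with $e_d-\sum_i\alpha_i e_i=\alpha\xi$, whence (in the basis $\Lambda e_1,\dots,\Lambda e_{d-1}$) the group $\Lambda\Z^d$ becomes $\Z^{d-1}+\Z\,(\alpha_1,\dots,\alpha_{d-1})$; the relation with $\xi$ gives $\alpha_i=-\xi_i/\xi_d$, and incommensurability of $\xi$ translates into $\Q$-linear independence of $1,\alpha_1,\dots,\alpha_{d-1}$, so Kronecker's theorem yields density. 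Your argument instead invokes the dual (annihilator) criterion for density of a subgroup of $\R^{d-1}$ and observes that any obstructing $t$ produces $m=\Lambda^{\top}t\in\Z^d$ lying in $\xi^{\perp}$, which incommensurability kills immediately.

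What each buys: your approach is coordinate-free and transparently isolates the single point where incommensurability enters; it also sidesteps the mild delicacy in the paper's reduction (showing each $\alpha_j\notin\Q$ is not by itself sufficient---one really needs the full $\Q$-linear independence, which does follow from incommensurability but requires a word more than the paper writes). The paper's approach, on the other hand, stays closer to the classical one-dimensional Kronecker statement and avoids naming the structure theorem for closed subgroups of $\R^n$ or Pontryagin duality, which your criterion rests on. Both black boxes are standard; your packaging is the cleaner of the two.
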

\begin{proof}{} Lemma~\ref{lem.densubgRd} follows easily from \cite[Proposition~6 \& Corollary, Section~VII.7]{Bou} which leads one to Kronecker's approximation theorem \cite[Proposition~7, Section~VII.7]{Bou}. For the reader's convenience we propose a more direct proof.
\par\noindent
Since matrix $\Lambda$ has rank $(d\!-\!1)$  and $\ker(\Lambda)=\R\,\xi$, we may assume, up to reorder the vectors, that the vectors $\Lambda e_1,\dots,\Lambda e_{d-1}$ are linearly independent and that there exist $d$ real numbers $\alpha_1,\dots,\alpha_{d-1},\alpha$ satisfying
\beq\label{Laei}
\Lambda e_d=\sum_{i=1}^{d-1}\alpha_i\,\Lambda e_i\quad\mbox{and}\quad e_d-\sum_{i=1}^{d-1}\alpha_i\,e_i=\alpha\,\xi.
\eeq
Replacing the vector $e_d$ in the first equality of \eqref{Laei} and using that $\Lambda\xi=0$, we get that
\[
\Lambda\,\Z^d=\sum_{i=1}^d\, \Z\,\Lambda e_i=\sum_{i=1}^{d-1}\,(\Z+\alpha_i\,\Z)\,\Lambda e_i.
\]
Assume that there exists $j\in\{1,\dots,d-1\}$ such that the set $(\Z+\alpha_j\,\Z)$ is not dense in $\R$, or, equivalently, $\alpha_j\in\Q$.
Taking the $j$-th and $d$-th coordinates in the second equality of \eqref{Laei}, it follows that $\xi_j+\alpha_j\,\xi_d=0$, which contradicts the incommensurability of $\xi$'s coordinates. Therefore, the set $\Lambda\,\Z^d$ is dense in $\R^{d-1}$, which concludes the proof.
\end{proof}
\begin{aexam}\label{exa.rhooDu}
Consider a two-dimensional vector field $b=\rho_0\,R_\perp\nabla u$ such that $\rho_0\in C^1_\sharp(Y_2)$ is a.e. positive in $Y_2$ and does vanish in $Y_2$, and such that $\nabla u\in C^1_\sharp(Y_2)^2$ does not vanish in $Y_2$ and $\overline{\nabla u}$ has incommensurable coordinates. Also assume that $\sigma_0:=1/\rho_0\in L^1_\sharp(Y_2)$. An example of such a function is given by \eqref{rho0}. Note that, by virtue of Proposition~\ref{pro.divcurl} the probability measure $\sigma_0(x)/\overline{\sigma_0}\;dx$ is invariant for the flow $X$ associated with $b$.
\par
Now, let $\sigma\in L^1_\sharp(Y_2)$ be a non negative function with $\overline{\sigma}=1$, such that ${\rm div}(\sigma b)=0$ in $\R^2$.
By Proposition~\ref{pro.divcurl} $\sigma(x)\,dx$ is an invariant probability measure for the flow $X$.
Hence, by Fubini's theorem we have for any $T>0$,
\beq\label{bTX}
\int_{Y_2}\sigma(x)\,b(x)\,dx = {1\over T} \int_0^T\left(\int_{Y_2}\sigma(x)\,b(X(t,x))\,dx\right)dt= \int_{Y_2}\left({X(T,x)-x\over T}\right)\sigma(x)\,dx.
\eeq
On the other hand, since the function $\rho_0$ does vanish in $Y_2$ together with $\rho_0>0$ a.e. in $Y_2$, from \cite[Lemma~3.1]{BrHe} applied with the invariant probability measure $d\mu(x):=\sigma_0(x)/\overline{\sigma_0}\;dx$, we deduce that
\[
\lim_{T\to\infty}{X(T,x)\over T} = \zeta := {\overline{\sigma_0\,b}\over \overline{\sigma_0}}={R_\perp\overline{\nabla u}\over\overline{\sigma_0}}\neq (0,0)
\quad\mbox{a.e. }x\in Y_2.
\]
Therefore, passing to the limit $T\to\infty$ in equality \eqref{bTX} thanks to Lebesgue's theorem, we get that for any invariant probability measure $\sigma(x)\,dx$ with $\sigma\in L^1_\sharp(Y_2)$,
\[
\int_{Y_2}\sigma(x)\,b(x)\,dx = \zeta\neq (0,0),
\]
which thus implies that $\sfD_b = \{\zeta\}$. However, by virtue of \cite[Corollary~3.4] {BrHe} we obtain that $\sfC_b = [0,\zeta]$.
Therefore, we have $\#C_b=\infty$, while $\#\sfD_b=1$.
\end{aexam}
\begin{arem}\label{rem.K}
The result of Example~\ref{exa.rhooDu} can be deduced from the Proposition~\ref{pro.OMS} combined with Remark~\ref{rem.homeoS}, using Kolmogorov's theorem \cite{Kol} (see, {\em e.g.}, \cite[Lecture~11]{Sin}, and see also \cite[Theorem~2.1]{Tas} for an elementary proof when one of the coordinates of the vector field does not vanish).
Indeed, since the divergence free field $R_\perp \nabla u$ of Example~\ref{exa.rhooDu} does not vanish in $Y_2$, by virtue of Kolmogorov's theorem there exists a $C^1$-diffeomorphism on $Y_2$ which transforms the flow $X$ associated with the vector field $b=\rho_0\,R_\perp\nabla u$, to a Stepanoff flow satisfying the assumptions of Proposition~\ref{pro.OMS} provided the zero set of $\rho_0$ is finite. Therefore, Remark~\ref{rem.homeoS} allows us to conclude.
\end{arem}
We can extend Example~\ref{exa.rhooDu} to the following variant of \cite[Corollary~3.4]{BrHe}, which provides a general framework where the sets $\sfC_b$ and $\sfD_b$ may differ.
\begin{apro}\label{pro.CbneDb}
Let $b=\rho\,\Phi\in C^1_\sharp(Y_2)^2$ be a vector field, where $\rho\in C^1_\sharp(Y_2)$ is a non negative function with a positive finite number of roots, and where $\Phi\in C^1_\sharp(Y_2)^2$ is a non vanishing vector field. Also assume that there exists a function $u\in C^1(Y_2)$ with $\nabla u\in C^0_\sharp(Y_2)^2$, such that $\overline{\nabla u}$ has incommensurable coordinates and $\Phi\cdot\nabla u=0$ in $Y_2$.
Then, the exists a vector $\zeta\in\R^2$ such that $\sfC_b=[0,\zeta]$, together with $\sfD_b={\rm\O}$ or $\sfD_b=\{\zeta\}$.
\end{apro}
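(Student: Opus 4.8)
The plan is to reduce the statement to the setting of Example~\ref{exa.rhooDu}, where the conclusion is already available, and then to extend that setting in the two directions needed here: the weaker regularity $\nabla u\in C^0_\sharp(Y_2)^2$, and the case $1/\rho\notin L^1_\sharp(Y_2)$. First I would record that $u$ is a first integral of the flow $X$ associated with $b=\rho\,\Phi$, since $\frac{d}{dt}u(X(t,x))=\rho(X(t,x))\,(\Phi\cdot\nabla u)(X(t,x))=0$; writing $u(x)=\overline{\nabla u}\cdot x+w(x)$ with $w\in C^1_\sharp(Y_2)$, Liouville's theorem (Proposition~\ref{pro.divcurl}) applied to $w$ gives $\overline{b}^\mu\cdot\overline{\nabla u}=\int_{Y_2}b\cdot\nabla u\,d\mu-\int_{Y_2}b\cdot\nabla w\,d\mu=0$ for every $\mu\in\cI_b$, so $\sfC_b$ lies in the line $\R\,\eta$ with $\eta:=R_\perp\overline{\nabla u}\neq(0,0)$ (the coordinates of $\overline{\nabla u}$ being incommensurable); since each root $x_0$ of $\rho$ is a stationary point, $0=\overline{b}^{\delta_{x_0}}\in\sfC_b$, so by compactness and convexity $\sfC_b=[\alpha\,\eta,\beta\,\eta]$ for some $\alpha\leq 0\leq\beta$. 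I would then check that $\nabla u$ never vanishes: $\Phi$ has no closed orbit (a closed orbit has zero $\overline{\nabla u}$-period, hence is null-homotopic by incommensurability, hence would enclose a zero of $\Phi$), and if $\nabla u(p_0)=0$ then $u$ would be constant along the closure of the orbit of $\Phi$ through $p_0$ and — $\Phi$ having no closed orbit — ultimately constant on $Y_2$, contradicting $\overline{\nabla u}\neq 0$. Hence, in two dimensions, $\Phi=h\,R_\perp\nabla u$ for a continuous nowhere-vanishing $h$ of constant sign; replacing $u$ by $-u$ if needed we may assume $h>0$, so $b=\widetilde\rho\,R_\perp\nabla u$ with $\widetilde\rho:=\rho\,h\geq 0$ continuous, having the same finite nonempty zero set as $\rho$.

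With this normal form I would determine $\sfD_b$. Let $\sigma\in L^1_\sharp(Y_2)$, $\sigma\geq0$, $\overline\sigma=1$, with $\sigma(x)\,dx\in\cI_b$. By Proposition~\ref{pro.divcurl}, $\widetilde\rho\,\sigma\,R_\perp\nabla u$ is divergence free on $\R^2$, hence equals $R_\perp\nabla\Psi$ for some $\Psi\in W^{1,1}_{\rm loc}(\R^2)$, i.e. $\nabla\Psi=\widetilde\rho\,\sigma\,\nabla u$; thus $\Psi$ is locally a function $F$ of $u$ and $\widetilde\rho\,\sigma=F'(u)$. Since $\widetilde\rho\,\sigma$ is $\Z^2$-periodic and $\{\overline{\nabla u}\cdot k:k\in\Z^2\}$ is dense in $\R$, the function $F'$ is forced to be a.e. constant, say $F'\equiv c\geq 0$, so $\sigma=c/\widetilde\rho$ a.e. As $h$ is bounded above and below, such a $\sigma$ can be normalized in $L^1_\sharp(Y_2)$ if and only if $1/\rho\in L^1_\sharp(Y_2)$: so $\sfD_b=\mbox{{\rm\O}}$ when $1/\rho\notin L^1_\sharp(Y_2)$, while when $1/\rho\in L^1_\sharp(Y_2)$ the unique absolutely continuous invariant probability measure is $\big((1/\widetilde\rho)/\overline{1/\widetilde\rho}\,\big)\,dx$, giving $\sfD_b=\{\zeta\}$ with $\zeta:=\overline{(1/\widetilde\rho)\,b}\big/\overline{1/\widetilde\rho}=\eta/\overline{1/\widetilde\rho}\neq(0,0)$.

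For $\sfC_b$ I would separate the cases. If $1/\rho\in L^1_\sharp(Y_2)$, then $\sigma_0:=1/\widetilde\rho$ is an a.e. positive invariant density and $\widetilde\rho$ vanishes somewhere, so \cite[Lemma~3.1]{BrHe} applies with $d\mu=\sigma_0/\overline{\sigma_0}\,dx$ and gives $\lim_{t\to\infty}X(t,x)/t=\zeta$ a.e.; together with $\sfC_b\subseteq\R\,\eta$ this yields $\sfC_b=[0,\zeta]$, exactly as in \cite[Corollary~3.4]{BrHe}. If $1/\rho\notin L^1_\sharp(Y_2)$, I would use the approximation $b_n:=(\widetilde\rho+1/n)\,R_\perp\nabla u$, whose bounded density $1/(\widetilde\rho+1/n)$ is invariant; since $\overline{\nabla u}$ has incommensurable coordinates, $\sfC_{b_n}=\{\zeta_n\}$ with $\zeta_n=\eta/\overline{1/(\widetilde\rho+1/n)}$ (by \cite[Example~4.1]{BrHe} or the same reasoning), and $\zeta_n\to(0,0)$ by monotone convergence; since $b$ vanishes, \cite[Theorem~3.1]{BrHe} then forces $\sfC_b=[0,(0,0)]=\{(0,0)\}$, i.e. $\sfC_b=[0,\zeta]$ with $\zeta=(0,0)$. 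In both cases $\sfC_b=[0,\zeta]$ and $\sfD_b\in\{\mbox{{\rm\O}},\{\zeta\}\}$, as claimed.

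The main obstacle I anticipate is making the first paragraph rigorous under the low regularity allowed here: the non-vanishing of $\nabla u$ rests on structure of flows on the torus that is outside the PDE toolkit used elsewhere in the paper, and once $b=\widetilde\rho\,R_\perp\nabla u$ is only a product of two merely continuous factors — rather than $C^1$ ones, as in Example~\ref{exa.rhooDu} — one must check whether \cite[Lemma~3.1]{BrHe}, \cite[Corollary~3.4]{BrHe} and \cite[Theorem~3.1]{BrHe} still apply verbatim or need a revisited proof. A cleaner route, perhaps the intended one, is to bypass both difficulties by invoking Kolmogorov's theorem \cite{Kol} as in Remark~\ref{rem.K}: since $\Phi$ is non-vanishing with no closed orbit, its flow is conjugate through a diffeomorphism of $Y_2$ to a linear flow, reducing $b$ to the form treated in Proposition~\ref{pro.OMS}, from which $\sfC_b=[0,\zeta]$ and the dichotomy for $\sfD_b$ follow at once (via Remark~\ref{rem.homeoS}).
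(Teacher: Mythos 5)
Your route is genuinely different from the paper's, but it rests on a claim that is neither among the hypotheses nor proved: that $\nabla u$ is nowhere vanishing. The statement only assumes $\Phi\cdot\nabla u=0$ with $\Phi$ non-vanishing and $\overline{\nabla u}$ with incommensurable coordinates; this is compatible with $\nabla u(p_0)=0$ at some point. Your argument for excluding this (``$u$ would be constant along the closure of the orbit through $p_0$ and, $\Phi$ having no closed orbit, ultimately constant on $Y_2$'') is a non sequitur: $u$ is constant on \emph{every} orbit of $\Phi$, so the vanishing of $\nabla u(p_0)$ adds no information, and for a merely $C^1$ non-vanishing field without closed orbits the orbit closures need not be dense (Denjoy-type minimal sets), so no contradiction with $\overline{\nabla u}\neq 0$ follows. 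Since the entire normal form $b=\widetilde\rho\,R_\perp\nabla u$ --- and with it your classification of invariant densities, the dichotomy on $1/\rho\in L^1_\sharp(Y_2)$, and the computation of $\sfC_b$ --- is built on this step, the gap is structural. Two further problems compound it: $\widetilde\rho=\rho\,h$ is only continuous (you note this yourself), so the perturbed fields and the results of \cite{BrHe} you invoke sit outside the $C^1$ framework in which the flow is even well defined; and the uniqueness claim $\sigma=c/\widetilde\rho$ a.e.\ for an $L^1$ invariant density is asserted via ``$\Psi$ is locally a function of $u$'', which is not justified at this regularity and is in any case far stronger than what the conclusion requires.

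The paper's proof sidesteps all of this by perturbing $\rho$ rather than reparametrizing $b$: with $b_n:=(\rho+1/n)\,\Phi\in C^1_\sharp(Y_2)^2$ non-vanishing and $u$ an invariant function for the associated flow $X_n$, the ergodic case of \cite[Theorem~3.1]{Pei} plus incommensurability give $\sfC_{b_n}=\{\zeta_n\}$; the second case of \cite[Theorem~3.1]{BrHe} then yields $\zeta_n\to\zeta$ and $\sfC_b=[0,\zeta]$, with no case distinction on the integrability of $1/\rho$. For $\sfD_b$, instead of classifying the invariant densities, one takes an \emph{arbitrary} invariant probability density $\sigma\in L^1_\sharp(Y_2)$ for $b$ and observes that $C_n\,\big(\rho/\rho_n\big)\,\sigma\,dx$ is an invariant probability measure for $b_n$, so its $b_n$-mean must equal $\zeta_n$; this gives $\zeta_n=C_n\,\overline{\sigma\,b}$, and letting $n\to\infty$ (where $C_n\to 1$ since $\rho>0$ a.e.) yields $\overline{\sigma\,b}=\zeta$ for every such $\sigma$, i.e.\ $\sfD_b\subseteq\{\zeta\}$. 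This requires neither $\nabla u\neq 0$, nor uniqueness of the invariant density, nor Kolmogorov's conjugacy. To salvage your approach you would have to either add the hypothesis $\nabla u\neq 0$ (and still repair the regularity of $\widetilde\rho$), or switch to the perturbation-in-$\rho$ argument.
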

\begin{proof}{}
First of all, define for $n\geq 1$, the function $\rho_n:=\rho+1/n>0$, and the vector field $b_n:=\rho_n\,\Phi$. By the equality $\Phi\cdot\nabla u=0$ in $Y_2$, we get that $u$ is an invariant function for the flow $X_n$ associated with the vector field $b_n$, with respect to Lebesgue's measure. Then, following the proof of \cite[Corollary~3.4]{BrHe}, from the ergodic case of \cite[Theorem~3.1]{Pei} and the incommensurability of $\overline{\nabla u}$'s coordinates, we deduce that there exists a vector $\zeta_n\in\R^2$ such that $\sfC_{b_n}=\{\zeta_n\}$.
\par
On the one hand, since the function $\rho$ vanishes in~$Y_2$, by the second case of \cite[Theorem~3.1]{BrHe} it turns out that the sequence $(\zeta_n)_{n\geq 1}$ converges to some $\zeta\in\R^2$, and that $\sfC_b=[0,\zeta]$.
\par
On the other hand, assume that the set $\sfD_b$ is non empty. Then, there exists an invariant probability measure $\sigma(x)\,dx$ with $\sigma\in L^1_\sharp(Y_2)$, for the flow $X$ associated with the vector field $b$, {\em i.e.} $\sigma(x)/\overline{\sigma}\,dx\in\cI_b$.
Following the proof of \cite[Corollary~3.3]{BrHe} define the probability measure $\mu_n$ by
\[
d\mu_n(x):=C_n\,{\rho(x)\over\rho_n(x)}\,\sigma(x)\,dx\quad\mbox{where}\quad C_n:=\left(\int_{Y_d}{\rho(x)\over\rho_n(y)}\,\sigma(y)\,dy\right)^{-1}.
\]
Note that $C_n<\infty$, since $\rho\,\sigma$ is non negative and not nul a.e. in $Y_2$.
Due to $\sigma(x)/\overline{\sigma}\,dx\in\cI_b$, by Proposition~\ref{pro.divcurl} we have
\[
\forall\,\varphi\in C^1_\sharp(Y_2),\quad \int_{Y_d}b_n(x)\cdot\nabla\varphi(x)\,d\mu_n(x)=C_n\int_{Y_d}b(x)\cdot\nabla\varphi(x)\,\sigma(x)\,dx=0,
\]
which again by Proposition~\ref{pro.divcurl} implies that $\mu_n\in\cI_{b_n}$.
This combined with $\sfC_{b_n}=\{\zeta_n\}$ yields
\[
\zeta_n=\int_{Y_d}b_n(x)\,d\mu_n(x)=C_n\int_{Y_d}\,b(x)\,\sigma(x)\,dx=C_n\,\overline{\sigma\,b}
\]
which is actually independent of $\sigma$.
Due $\rho>0$ a.e. in $Y_2$, by Lebesgue's theorem we get that the sequence $(C_n)_{n\geq 1}$ converges to $\overline{\sigma}=1$.
Hence, we deduce that
\[
\zeta=\lim_{n\to\infty}\zeta_n=\overline{\sigma\,b}
\]
which is also independent of $\sigma$. Therefore, we obtain that $\sfD_b=\{\zeta\}$, which concludes the proof of Proposition~\ref{pro.CbneDb}.
\end{proof}
%%%%%%%%%%

%%%%%%%%%%
%%%%%%%%%%
\end{document}